%
%
%


\documentclass{amsproc}




\usepackage{hyperref}
\usepackage{cite}
\usepackage{amssymb}
\usepackage{amsmath}
\usepackage{amsthm}
\usepackage{amsfonts}
\usepackage{graphicx}
\usepackage{bbm}
\usepackage{xcolor}
\usepackage[toc,page]{appendix}
\usepackage{subfigure}
\usepackage{mathtools}
\usepackage{bm} 
\usepackage[normalem]{ulem}
\usepackage{comment}

\newtheorem{theorem}{Theorem}[section]
\newtheorem{lemma}[theorem]{Lemma}
\newtheorem*{theorem*}{Theorem}
\newtheorem{proposition}[theorem]{Proposition}
\newtheorem{corollary}[theorem]{Corollary}

\theoremstyle{definition}
\newtheorem{definition}[theorem]{Definition}
\newtheorem{example}[theorem]{Example}

\newtheorem{notation}[theorem]{Notation}

\theoremstyle{remark}
\newtheorem{remark}[theorem]{Remark}

\numberwithin{equation}{section}

\newcommand{\R}{\mathbb{R}}  
\newcommand{\Prob}{\mathbb{P}}

\newcommand{\Hei}{\mathbb{H}}

\begin{document}

\title[spectral gap bound H-groups]{Spectral gap bounds on H-type groups}



\author[Carfagnini]{Marco Carfagnini{$^{\dag}$}}
\thanks{\footnotemark {$\dag$} Research was supported in part by an AMS-Simons Travel Grant}
\address{Department of Mathematics\\
University of California, San Diego\\
La Jolla, CA 92093-0112,  U.S.A.}
\email{mcarfagnini@ucsd.edu}

\author[Gordina]{Maria Gordina{$^{\dag  \dag}$}}
\address{ Department of Mathematics\\
University of Connecticut\\
Storrs, CT 06269,  U.S.A.}
\thanks{\footnotemark {$\dag \dag$} Research was supported in part by NSF Grants DMS-2246549 and DMS-1954264.}
\email{maria.gordina@uconn.edu}


\keywords{Sub-Laplacian; H-type group; spectral gap; small ball problem}

\subjclass[2020]{Primary 35P05, 35H10; Secondary 35K08}

\date{\today \ \emph{File:\jobname{.tex}}}

\begin{abstract}
In this note we provide bounds on the spectral gap for the Dirichlet sub-Laplacians on $H$-type groups. We use probabilistic techniques and in particular small deviations of the corresponding hypoelliptic Brownian motion.
\end{abstract}

\maketitle


\tableofcontents

\section{Introduction and main result}

Let $(E, d)$ be a metric space and $\{X_{t}\}_{0 \leqslant t\leqslant T}$ be an $E$-valued stochastic process with continuous paths such that $X_{0} = x_{0}$ a.s. for some $x_{0} \in E$ and $T>0$ is fixed. Denote by $W_{x_{0}} (E)$ the space of $E$-valued continuous functions on $[0,T]$ starting at $x_{0}$. The process $X_{t}$ is said to satisfy a \emph{small deviation principle} with rates $\alpha$ and $\beta$ if there exists a constant $c>0$ such that
\begin{equation}\label{eqn.small.dev.prin.general}
\lim_{\varepsilon\rightarrow 0} - \varepsilon^{\alpha} \vert \log \varepsilon\vert^{\beta} \log \Prob \left( \max_{0 \leqslant t\leqslant T} d (X_{t}, x_{0} )< \varepsilon \right) =c.
\end{equation}
Small deviations  have many applications including metric entropy estimates and limit laws such as Chung's laws of the iterated logarithm. We are interested in such a problem and its analytic consequences when $E$ is a Heisenberg-type (H-type) group.

\begin{notation}\label{notation.evalues}
Let us denote by $\lambda_{1}^{(n)}$ the lowest Dirichlet eigenvalue of $-\frac{1}{2} \Delta_{\R^{n}}$ on the unit ball in $\R^{n}$.
\end{notation}

Suppose $B_{t}$ is a standard Brownian motion on $\R^{n}$, then 
\begin{equation}\label{eqn.small.dev.BM}
\lim_{\varepsilon\rightarrow 0} - \varepsilon^{2} \log \Prob \left( \max_{0 \leqslant t\leqslant T} \vert B_{t} \vert < \varepsilon \right) = \lambda_{1}^{(n)} T
\end{equation}
connects the small deviation principle and the spectral gap for the infinitesimal generator of $B_{t}$. In general, small deviations for a Markov process $X_{t}$ do not necessarily encode information about the spectral gap for the infinitesimal generator of $X_{t}$. Indeed, in \cite{Carfagnini2022} it was shown that the Kolmogorov diffusion $K_{t}$ satisfies \eqref{eqn.small.dev.prin.general} with $c= \lambda_{1}^{(1)}T$, though $\lambda_{1}^{(1)}$ is not the spectral gap for the infinitesimal generator of $K_{t}$.

When $X_{t}$ is a Markov diffusion whose infinitesimal generator is a sub-Laplacian $\Delta_{\mathbb{G}}$ on a Carnot group $\mathbb{G}$, then the small deviation principle \eqref{eqn.small.dev.BM} for $X_{t}$ and the spectral gap of $\Delta_{\mathbb{G}}$ holds true. Recall that a homogeneous Carnot group $\mathbb{G}$ is $\R^{N}$ endowed with a non-commutative group structure. Such a group is a sub-Riemannian manifold with  the corresponding sub-Laplacian $\Delta_{\mathbb{G}}$. Let $X_{t}$ be the $\mathbb{G}$-valued stochastic process whose infinitesimal generator is the sub-Laplacian $\frac{1}{2}\Delta_{\mathbb{G}}$, and let $\vert \cdot \vert$ be a homogeneous norm on $\mathbb{G}$. In  \cite[Theorem 3.4]{CarfagniniGordina2023} it was shown that the Dirichlet sub-Laplacian $-\frac{1}{2} \Delta_{\mathbb{G}}$ restricted to a bounded open connected subset $\Omega$ of $\mathbb{G}$ has a spectral gap, moreover, its spectrum is discrete, non-negative, and the lowest eigenvalue is strictly positive. By \cite[Corollary 5.4]{CarfagniniGordina2023} it then follows that

\begin{equation}\label{eqn.small.dev.hypo.BM}
\lim_{\varepsilon\rightarrow 0} - \varepsilon^{2} \log \Prob \left( \max_{0 \leqslant t\leqslant T} \vert X_{t} \vert < \varepsilon \right) = \lambda_{1} T,
\end{equation}
where $\lambda_{1}$ is the spectral gap of the sub-Laplacian $-\frac{1}{2}\Delta_{\mathbb{G}}$ restricted to the ball $\{ x\in \mathbb{G} : \vert x \vert < 1 \}$.

In this paper we use the small deviations principle \eqref{eqn.small.dev.hypo.BM} to obtain bounds on the spectral gap of sub-Laplacians on $H$-type groups. This is a class of Carnot groups and we refer to Section~\ref{sec.groups} for more details on $H$-type groups. The main result is the following theorem.

\begin{theorem*}[Theorem \ref{thm.main}]
Let $\mathfrak{g}$ be an $H$-type Lie algebra with center $\mathfrak{z} \cong \R^{n}$. Let $\mathbb{G} \cong \R^{m} \times \R^{n}$ be the corresponding $H$-type group  with the homogeneous norm $\vert \cdot \vert$ and the sub-Laplacian $\Delta_{\mathbb{G}}$. Then
\begin{equation*}
\lambda_{1}^{(m)} \leqslant \lambda_{1} \leqslant c \left( \lambda_{1}^{(m)},\lambda_{1}^{(n)}  \right),
\end{equation*}
where $\lambda_{1} = \lambda_{1} (m,n)$ is the spectral gap of $-\frac{1}{2} \Delta_{\mathbb{G}}$ restricted to the unit ball $\{ x\in\mathbb{G}: \vert x \vert <1\}$, and
\begin{align*}
& c \left( \lambda_{1}^{(m)}, \lambda_{1}^{(n)}  \right) := f( x^{\ast} )= \inf_{0 < x<1} f(x),
\\
& f(x) = \frac{\lambda_{1}^{(m)}}{\sqrt{1-x}} + \frac{\lambda_{1}^{(n)} \sqrt{1-x}}{4x},
\\
& x^{\ast}= \frac{\sqrt{ \left( \lambda_{1}^{(n)}\right)^{2} + 32 \lambda_{1}^{(n)} \lambda_{1}^{(m)} } - 3\lambda_{1}^{(n)}  }{2 \left( 4\lambda_{1}^{(m)} - \lambda_{1}^{(n)} \right)},
\end{align*}
where $\lambda_{1}^{(n)}$ is the lowest Dirichlet eigenvalues in the unit ball in $\R^{n}$.
\end{theorem*}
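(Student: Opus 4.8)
The plan is to feed the two claimed inequalities into the small deviation identity \eqref{eqn.small.dev.hypo.BM}. Write $X_t=(B_t,Z_t)$ for the hypoelliptic Brownian motion on $\mathbb{G}\cong\R^m\times\R^n$ started at the identity, where $B$ is a standard $m$-dimensional Brownian motion on the horizontal layer, $Z_t=\kappa_0\int_0^t[B_s,dB_s]$ is the $\R^n$-valued vertical component, and $|(a,b)|=\big(|a|^4+c_0|b|^2\big)^{1/4}$ is the homogeneous norm fixed in Section~\ref{sec.groups}; set $p(\varepsilon):=\Prob\big(\max_{0\le t\le T}|X_t|<\varepsilon\big)$, so that $\lambda_1 T=\lim_{\varepsilon\to0}\big(-\varepsilon^2\log p(\varepsilon)\big)$ by \eqref{eqn.small.dev.hypo.BM}. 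The lower bound is then immediate: since $|X_t|\ge|B_t|$ we have $p(\varepsilon)\le\Prob\big(\max_{0\le t\le T}|B_t|<\varepsilon\big)$, and \eqref{eqn.small.dev.BM} applied to the right-hand side gives $\liminf_{\varepsilon\to0}\big(-\varepsilon^2\log p(\varepsilon)\big)\ge\lambda_1^{(m)}T$, i.e.\ $\lambda_1\ge\lambda_1^{(m)}$. It remains to prove $\lambda_1\le f(x)$ for every $x\in(0,1)$ and then to minimize $f$ on $(0,1)$.

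For the upper bound, fix $x\in(0,1)$, put $\delta_1=(1-x)^{1/4}\varepsilon$ and $\delta_2=\sqrt{x/c_0}\,\varepsilon^2$, and observe that on $\{\max_t|B_t|<\delta_1\}\cap\{\max_t|Z_t|<\delta_2\}$ one has $|X_t|^4=|B_t|^4+c_0|Z_t|^2<(1-x)\varepsilon^4+x\varepsilon^4=\varepsilon^4$, so this event is contained in $\{\max_t|X_t|<\varepsilon\}$. The key ingredient is a decoupling of $Z$ from the radial part of $B$ that is special to $H$-type groups: the Clifford relations $J_jJ_k+J_kJ_j=-2\delta_{jk}I$ satisfied by the $H$-type structure maps force the vertical martingale to have scalar covariation $\langle Z^{(j)},Z^{(k)}\rangle_t=\kappa_0^2\,\delta_{jk}\int_0^t|B_s|^2\,ds$, and then — writing $B$ in skew-product form $B_t=R_t\,\Xi_{\int_0^tR_s^{-2}ds}$, with $R$ a Bessel$(m)$ process and $\Xi$ an independent Brownian motion on $S^{m-1}$, and performing a Dambis--Dubins--Schwarz time change — one expects $Z_t=\widetilde\gamma_{\,\kappa_0^2\int_0^t|B_s|^2ds}$ for a standard $n$-dimensional Brownian motion $\widetilde\gamma$ that is \emph{independent} of the radial process $(|B_t|)_{0\le t\le T}$. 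Since $\{\max_t|B_t|<\delta_1\}$ depends only on $(|B_t|)_t$ and forces $\int_0^T|B_s|^2\,ds<\delta_1^2T$, whence $\max_{0\le t\le T}|Z_t|\le\max_{0\le u\le\kappa_0^2\delta_1^2T}|\widetilde\gamma_u|$, independence gives
\[
p(\varepsilon)\ \ge\ \Prob\!\Big(\max_{0\le t\le T}|B_t|<\delta_1\Big)\cdot\Prob\!\Big(\max_{0\le u\le\kappa_0^2\delta_1^2T}|\widetilde\gamma_u|<\delta_2\Big),
\]
and applying \eqref{eqn.small.dev.BM} to the first factor (with $\delta_1^2=\sqrt{1-x}\,\varepsilon^2$) and, after Brownian scaling, to the second (note that $\kappa_0^2\delta_1^2T/\delta_2^2\to\infty$ as $\varepsilon\to0$) yields
\[
\limsup_{\varepsilon\to0}\big(-\varepsilon^2\log p(\varepsilon)\big)\ \le\ \Big(\frac{\lambda_1^{(m)}}{\sqrt{1-x}}+\frac{\kappa_0^2c_0\,\lambda_1^{(n)}\sqrt{1-x}}{x}\Big)T.
\]

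With the normalizations of Section~\ref{sec.groups} one has $\kappa_0^2c_0=\tfrac14$, so the last right-hand side is $f(x)T$; by \eqref{eqn.small.dev.hypo.BM} this gives $\lambda_1\le f(x)$ for every $x\in(0,1)$, hence $\lambda_1\le\inf_{0<x<1}f(x)$. It then remains to carry out the one-variable minimization: $f$ is smooth on $(0,1)$ with $f(0^+)=f(1^-)=+\infty$, and $f'(x)=0$ is equivalent to the quadratic $(4\lambda_1^{(m)}-\lambda_1^{(n)})x^2+3\lambda_1^{(n)}x-2\lambda_1^{(n)}=0$, whose unique root in $(0,1)$ is the displayed $x^\ast$ (with $x^\ast=\tfrac23$ in the degenerate case $4\lambda_1^{(m)}=\lambda_1^{(n)}$); therefore $\inf_{0<x<1}f(x)=f(x^\ast)=c(\lambda_1^{(m)},\lambda_1^{(n)})$.

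The step I expect to be the main obstacle is the decoupling — constructing $\widetilde\gamma$ and proving it independent of the radial process $(|B_t|)_t$. This is precisely where the $H$-type hypothesis is used: for a general Carnot group the covariation matrix of the vertical part is not scalar, and $Z$ would be a time change of a genuinely multidimensional Gaussian martingale rather than of an honest Brownian motion, destroying the clean product bound. Making the independence rigorous requires care: the natural route is to realize $X$ from the outset on a probability space carrying the skew-product data $(R,\Xi)$ and to verify that, after the spherical time change, the integrand defining $Z$ is $R$-measurable while the driving martingale is $\Xi$, so that the Dambis--Dubins--Schwarz Brownian motion one extracts is a functional of $\Xi$ and therefore independent of $R$; one must also check that the $t=0$ behaviour (where $B_0=0$ and the spherical clock $\int_0^tR_s^{-2}ds$ is infinite) is a null-set phenomenon not affecting $p(\varepsilon)$. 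A minor, routine point is that the $o(1)$ errors above are not claimed to be uniform in $x$, which is why one fixes $x$, lets $\varepsilon\to0$, and only then takes the infimum over $x$.
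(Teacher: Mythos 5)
Your overall strategy coincides with the paper's proof: the lower bound from $\vert g_{t}\vert \geqslant \vert B_{t}\vert$ together with \eqref{eqn.small.dev.BM} and \eqref{eqn.small.dev.hypo.BM}; the upper bound by splitting the homogeneous-ball event into a horizontal piece and a vertical piece, representing the vertical component as an $n$-dimensional Brownian motion run at the clock $\tau(t)=\frac14\int_{0}^{t}\vert B_{s}\vert^{2}\,ds$ and independent of $\vert B\vert$, bounding the clock on the horizontal event, factoring by independence, applying \eqref{eqn.small.dev.BM} to each factor, and finally minimizing $f$ over $(0,1)$. Your bookkeeping of constants ($\kappa_{0}^{2}c_{0}=\tfrac14$) and your calculus step (the quadratic $(4\lambda_{1}^{(m)}-\lambda_{1}^{(n)})x^{2}+3\lambda_{1}^{(n)}x-2\lambda_{1}^{(n)}=0$ with root $x^{\ast}$) are correct and reproduce the paper's computation.

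The one step you leave unproved is exactly the paper's key Lemma \ref{lemma.IW.general}, and as written this is a genuine gap: you state the decoupling only as what ``one expects'' and defer it to a skew-product/Dambis--Dubins--Schwarz construction that you do not carry out. Moreover your intermediate claim that ``the integrand defining $Z$ is $R$-measurable while the driving martingale is $\Xi$'' is not literally correct: the integrand $U^{(j)}B_{s}=R_{s}U^{(j)}\Xi_{A_{s}}$ depends on the angular position; what saves the argument is that $U^{(j)}B_{s}\perp B_{s}$, so only the angular part of $dB_{s}$ contributes, and one would still need a conditional time-change given $R$ plus the $t=0$ singular-clock discussion you mention. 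The paper avoids all of this: it adjoins the radial martingale $X_{t}=\int_{0}^{t}\langle B_{s}/\vert B_{s}\vert,dB_{s}\rangle$ to the vertical martingales $A_{1},\ldots,A_{n}$ of \eqref{eqn.hypo.bm.coefficients}, checks that all pairwise covariations of $(X,A_{1},\ldots,A_{n})$ vanish (skew-symmetry of $U^{(i)}$ kills $\langle X,A_{i}\rangle$, and the anticommutation relations via \eqref{eqn.prop.matrices} kill $\langle A_{i},A_{j}\rangle$), and invokes Knight's theorem \cite[Ch.~II, Theorem~7.3]{IkedaWatanabe1989} to get an $(n+1)$-dimensional Brownian motion with $A_{i}(t)=W_{i}(\tau(t))$; independence of $(W_{1},\ldots,W_{n})$ from $\vert B\vert$ is then immediate since $\vert B_{t}\vert^{2}=2\int_{0}^{t}\vert B_{s}\vert\,dX_{s}+mt$ makes the radial process measurable with respect to $X$. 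So your route can in principle be made rigorous, but the efficient fix for the hole in your argument is precisely the paper's Lemma \ref{lemma.IW.general}.
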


The proof of Theorem \ref{thm.main} is of probabilistic nature and it relies on Lemma \ref{lemma.IW.general} which is a version of the following well-known fact about the stochastic L\'evy area. If $B_{t}$ is a two-dimensional Brownian motion and
\[
A_{t} := \frac{1}{2} \int_{0}^{t} B_{1}(s) dB_{2}(s) - B_{2}(s)dB_{1}(s)
\]
the corresponding L\'evy area, then a classical result in stochastic analysis \cite[Ch.VI Example 6.1]{IkedaWatanabe1989} states that there exists a standard real-valued Brownian motion $W_{t}$ independent of $\vert B_{t}\vert$ such that

\begin{align}\label{eqn.idk}
A_{t} = W_{\tau (t)},
\end{align}
where $\tau (t) := \frac{1}{4} \int_{0}^{t} \vert B_{s} \vert^{2} ds$. In Lemma \ref{lemma.IW.general} we prove a version of \eqref{eqn.idk}, where $B_{t}$ is replaced by an $m$-dimensional Brownian motion, and $A_{t}$ by an $n$-dimensional martingale consisting of stochastic integrals depending on $B_{t}$.

\section{Geometric background}\label{sec.groups}

\subsection{Carnot groups}
We begin by recalling basic facts about Carnot groups.

\begin{definition}[Carnot groups]
We say that $\mathbb{G}$ is a Carnot group of step $r$ if $\mathbb{G}$  is a connected and simply connected Lie group whose Lie algebra $\mathfrak{g}$ is \emph{stratified}, that is, it can be written as
\begin{equation*}
\mathfrak{g}=V_{1}\oplus\cdots\oplus V_{r},
\end{equation*}
where
\begin{align*}
& \left[V_{1}, V_{i-1}\right]=V_{i}, \hskip0.1in 2 \leqslant i \leqslant r,
\\
& [ V_{1}, V_{r} ]=\left\{ 0 \right\}.
\end{align*}
\end{definition}
To avoid degenerate cases we assume that the dimension of the Lie algebra $\mathfrak{g}$ is at least $3$. In addition we will use a stratification such that the center of $\mathfrak{g}$ is contained in  $V_{r}$. We generally assume that $r \geqslant 2$ to exclude the case when the corresponding Laplacian is elliptic. In particular, Carnot groups are nilpotent. We will use $\mathcal{H}:=V_{1}$ to denote the space of \emph{horizontal} vectors that generate the rest of the Lie algebra, noting that $V_2=[\mathcal{H}, \mathcal{H}], ..., V_r = \mathcal{H}^{(r)}$.  Finally, by \cite[Proposition 2.2.17, Proposition 2.2.18]{BonfiglioliLanconelliUguzzoniBook} we can assume without loss of generality that a Carnot group can be identified with a \emph{homogeneous Carnot group}. For $i=1, ..., r$, let $d_{i}=\operatorname{dim} V_{i}$ and $d_0=0$. The Euclidean space underlying $G$ has  dimension
\begin{align*}
N:=\sum_{i=1}^{r} d_{i},
\end{align*}
that is, $\mathbb{G}\cong \R^{N}$, and the homogeneous dimension of $\mathbb{G}$ is given by
\begin{equation*}
Q:=\sum_{i=1}^{r} i \cdot  d_{i}.
\end{equation*}
A homogeneous Carnot group is equipped with a natural family of \emph{dilations} defined for any $a>0$ by
\begin{equation*}
D_{a}\left( x_{1}, \dots, x_{N} \right):=\left( a^{\sigma_{1}}x_{1}, \dots, a^{\sigma_{N}}x_{N}\right),
\end{equation*}
where $\sigma_{j} \in \mathbb{N}$ is called the \emph{homogeneity} of $x_{j}$, with
\[
\sigma_{j}= i \quad\text{ for } \sum_{k=0}^{i-1}d_{k} +1 \leqslant j \leqslant \sum_{k=1}^{i}d_{k},
\]
with $i=1,\ldots,r$ and recalling that $d_0=0$.
That is, $\sigma_{1}=\dots= \sigma_{d_{1}}=1, \sigma_{d_{1}+1}=\cdots=\sigma_{d_1+d_2}=2$, and so on.

We assume that $\mathcal{H}$ is equipped with an inner product $\langle \cdot, \cdot\rangle_{\mathcal{H}}$, in which case the Carnot group has a natural sub-Riemannian structure. Namely, one may use left translation to define a \emph{horizontal distribution} $\mathcal{D}$ as a sub-bundle of the tangent bundle $T\mathbb{G}$, and a metric on $\mathcal{D}$. First, we identify the space $\mathcal{H} \subset \mathfrak{g}$ with $\mathcal{D}_{e}\subset T_e\mathbb{G}$. Then for $g\in \mathbb{G}$ let $L_{g}$ denote left translation $L_{g}h =gh$, and define $\mathcal{D}_{g}:=(L_g)_{\ast}\mathcal{D}_{e}$ for any $g \in \mathbb{G}$. A metric on $\mathcal{D}$ may then be defined by
\begin{align*}
\langle u, v \rangle_{\mathcal{D}_g} &:= \langle  (L_{g^{-1}})_{\ast} u, (L_{g^{-1}})_{\ast}v\rangle_{\mathcal{D}_e}
\\
&= \langle (L_{g^{-1}})_{\ast}u,(L_{g^{-1}})_{\ast}v\rangle_{\mathcal{H}} \qquad \text{for all } u, v \in\mathcal{D}_g.
\end{align*}
We will sometimes identify the horizontal distribution $\mathcal{D}$ and $\mathcal{H}$. Vectors in $\mathcal{D}$ are called \emph{horizontal}.

Let $\{ X_{1}, \ldots, X_{d_{1}} \}$ be an orthonormal basis for $\mathcal{H}$ of left-invariant vector fields, then the sum of squares operator
\[
\Delta_{\mathbb{G}} := \sum_{j=1}^{d_{1}} X_{j}^{2}
\]
is called the canonical sub-Lalplacian on $\mathbb{G}$.  By \cite[Section 3]{DriverGrossSaloff-Coste2009a} the operator $\Delta_{\mathbb{G}}$ depends only on the inner product on $\mathcal{H}$, and not on the choice of the basis.

\begin{definition}\label{dfn.hom.norm}
Suppose $\mathbb{G} = \left( \R^N, \star, \delta_\lambda \right)$ is a homogeneous Carnot group, and $\rho:\mathbb{G} \rightarrow [0,\infty)$ is a continuous function with respect to the Euclidean topology. Then $\rho$ is a \emph{homogeneous norm} if it satisfies the following properties
\begin{align*}
& \rho\left( \delta_{\lambda} (x) \right) = \lambda \rho(x)  \text{ for every }  \lambda >0   \text{ and } x \in \mathbb{G},
\\
& \rho(x) >0  \text{ if and only if }  x\not=0.
\end{align*}
The norm $\rho$ is called \emph{symmetric} if it satisfies $\rho\left(x^{-1} \right) = \rho\left( x \right)$ for every $x \in \mathbb{G}$.
\end{definition}

\begin{definition}\label{dfn.hypo.BM}
A $\mathbb{G}$-valued Markov process $g_{t}$ is called a hypoelliptic (or horizontal) Brownian motion if its infinitesimal generator is $\frac{1}{2} \Delta_{\mathbb{G}}$.
\end{definition}

For $g\in \mathbb{G}$, let us denote by $\theta^{\ell}_{g}$ the \emph{left Maurer-Cartan form} on $\mathbb{G}$, that is, a $\mathfrak{g}$-valued 1-form on $\mathbb{G}$ defined by $\theta^{\ell}_{g} (v) := d L_{g} (v)$, $v\in T_{g} \mathbb{G}$. The horizontal Brownian motion $g_{t}$ is then the solution to the $\mathfrak{g}$-valued stochastic differential equation
\begin{align*}
& \theta^{\ell}_{g}  (dg_{t}) = \left( dB_{t}, 0 \right),
\\
& g_{0} = e,
\end{align*}
where $B_{t}$ is an $\mathcal{H}$-valued Brownian motion and $e$ is the identity in $\mathbb{G}$.

\subsection{$H$-type groups}

H-type or Heisenberg-type groups are examples of Carnot groups. They were first introduced by Kaplan in \cite{Kaplan1980} and  basic facts about these groups can be found in \cite[Chapter 18]{BonfiglioliLanconelliUguzzoniBook}.

\begin{definition}\label{def.H.group}
A finite dimensional real Lie algebra  $\mathfrak{g}$ is said to be an \emph{$H$-type Lie algebra} if there exists an inner product $\langle \cdot, \cdot \rangle$ on $\mathfrak{g}$ such that
\[
[ \mathfrak{z}^{\perp}, \mathfrak{z}^{\perp}] = \mathfrak{z},
\]
where $\mathfrak{z}$ is the center of $\mathfrak{g}$, and for every fixed $Z\in \mathfrak{z}$, the map $J_{Z} : \mathfrak{z}^{\perp} \rightarrow \mathfrak{z}^{\perp}$ defined by
\[
\langle J_{Z} X, Y \rangle = \langle Z, [X,Y] \rangle
\]
is an orthogonal map whenever $\langle Z, Z \rangle =1$.

An \emph{$H$-type group} $\mathbb{G}$ is a connected and simply connected Lie group whose Lie algebra $\mathfrak{g}$ is an $H$-type algebra.
\end{definition}

\begin{remark}[Corollary 1 in \cite{Kaplan1980}]
Let $n$ and $m$ be two integers. Then there exists an $H$-type Lie algebra of dimension $m+n$ whose center has dimension $n$ if and only if $n <\rho (m)$, where $\rho$ is the Hurwitz-Radon function
\[
\rho : \mathbb{N} \rightarrow \mathbb{N}, \; \rho (n) = 8p+q, \; \text{where} \; n = \text{(odd)} \cdot 2^{4p+q}, \; 0\leqslant q\leqslant 3.
\]
\end{remark}

\begin{example}\label{ex.Heisenberg}\rm
The Heisenberg group $\Hei$ is the Lie group identified with $\R^{3}$ endowed with the following group multiplication law
\begin{align*}
& \left( \mathbf{v}_{1}, z_{1} \right) \cdot \left( \mathbf{v}_{2}, z_{2} \right) := \left( x_{1}+x_{2}, y_{1}+y_{2}, z_{1}+z_{2} + \frac{1}{2}\omega\left( \mathbf{v}_{1}, \mathbf{v}_{2} \right)\right),
\\
& \text{ where } \mathbf{v}_{1}=\left( x_{1}, y_{1} \right), \mathbf{v}_{2}=\left( x_{2}, y_{2} \right) \in \mathbb{R}^{2},
\\
& \omega: \mathbb{R}^{2} \times \mathbb{R}^{2} \longrightarrow \mathbb{R}, \; \; \omega\left( \mathbf{v}_{1}, \mathbf{v}_{2} \right):= x_{1}y_{2}-x_{2} y_{1}
\end{align*}
is the standard symplectic form on $\mathbb{R}^{2}$. The identity in $\Hei$ is $e=(0, 0, 0)$ and the inverse is given by $\left( \mathbf{v}, z \right)^{-1}= (-\mathbf{v},-z)$.

The Heisenberg Lie algebra $\mathfrak{h}$ is spanned by the vector fields
\[
X=\frac{\partial}{\partial x} - \frac{y}{2}  \frac{\partial}{\partial z}, \; \; Y= \frac{\partial}{\partial y} + \frac{x}{2}  \frac{\partial}{\partial z}, \; \; Z= \frac{\partial}{\partial z}.
\]
Note that $[X,Y]= Z$ is the only non-zero Lie bracket, and hence $\mathfrak{h}$ is an $H$-type Lie algebra under the inner product such that $\left\{ X, Y, Z\right\}$ is an orthonormal basis. In particular, $\mathfrak{z}=\operatorname{span} \left\{ Z \right\}$, and $J_{Z} X = Y, J_{Z} Y = -X$.
\end{example}

\begin{example}\rm
For $n\geqslant 1$, the Heisenberg-Weyl group $\Hei_{n}$ is the Lie group identified with $\R^{2n+1}$ with the following group multiplication law
\begin{align*}
& \left( \mathbf{v}, z \right) \cdot \left( \mathbf{v}^{\prime}, z^{\prime} \right) := \left( \mathbf{v} +  \mathbf{v}^{\prime}, z+z^{\prime} + \frac{1}{2}\omega\left( \mathbf{v}, \mathbf{v}^{\prime} \right)\right),
\\
& \text{ where } \mathbf{v}= (x_{1}, \ldots x_{n}),\;\mathbf{v}^{\prime}=(x_{1}^{\prime}, \ldots, x_{2n}^{\prime}) \in \mathbb{R}^{2n}, \; z, z^{\prime} \in \R,
\\
& \omega: \mathbb{R}^{2n} \times \mathbb{R}^{2n} \longrightarrow \mathbb{R}, \; \;  \omega\left( \mathbf{v}_{1}, \mathbf{v}_{2} \right):= \sum_{j=1}^{n} x_{2j-1}x_{2j}^{\prime} - x_{2j-1}^{\prime}x_{2j}.
\end{align*}
Its Lie algebra $\mathfrak{h}_{n}$ is generated by
\[
X_{2j-1}=\frac{\partial}{\partial x_{2j-1}} - \frac{1}{2} x_{2j} \frac{\partial}{\partial z}, \; \; X_{2j}= \frac{\partial}{\partial x_{2j}} + \frac{1}{2} x_{2j-1} \frac{\partial}{\partial z}, \; \; Z= \frac{\partial}{\partial z},
\]
where $j=1,\ldots n$. Note that the only non zero brackets are $[X_{2j-1}, X_{2j}]=Z$, and $\mathfrak{z}=\operatorname{span}\left\{ Z \right\}$, and hence $\mathfrak{h}_{n}$ is an $H$-type algebra under the inner product that makes $\left\{ X_{1}, \ldots, X_{2n}, Z \right\}$ orthonormal. Moreover, one has that $J_{Z}X_{2j-1} = X_{2j}$, $J_{Z} X_{2j} = - X_{2j-1}$ for all $j=0,\ldots n$.
\end{example}

\begin{remark}\label{rmk.H.are.Carnot}
Let $\mathbb{G}$ be an $H$-type group, then $\mathbb{G}$ is a Carnot group of step $2$. Indeed, if $\mathfrak{z}$ denotes the center of its Lie algebra $\mathfrak{g}$, then one can consider the stratification $V_{1} := \mathfrak{z}^{\perp}, \, V_{2} := \mathfrak{z}$.
\end{remark}

The following characterization of $H$-type groups can be found in \cite[Theorem 18.2.1]{BonfiglioliLanconelliUguzzoniBook}.

\begin{theorem}\label{thm.char.H.groups}
Let $\mathbb{G}$ be an $H$-type group. Then there exist integers $m, n$ such that $\mathbb{G}$ is isomorphic to $\R^{m+n}$ with the group law given by
\begin{equation}\label{eqn.group.law}
x \cdot y = \left( \overline{x} + \overline{y}, \widehat{x}_{1} + \widehat{y}_{1} + \frac{1}{2} \langle U^{(1)}\overline{x}, \overline{y}\rangle, \ldots,  \widehat{x}_{n} + \widehat{y}_{n} + \frac{1}{2} \langle U^{(n)}\overline{x}, \overline{y}\rangle  \right),
\end{equation}
for any $x := (\overline{x}, \widehat{x}), \, y = (\overline{y}, \widehat{y} ) \in \R^{m+n}$, where the matrices $U^{(1)}, \ldots, U^{(n)}$ satisfy the following properties

1. $U^{(i)}$ is an $m \times m$ skew-symmetric and orthogonal matrix for $i=1, \ldots, n$.

2. $U^{(i)} U^{(j)} + U^{(j)}U^{(i)} =0$ for every $i\neq j$.
\end{theorem}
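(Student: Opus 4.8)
The plan is to realize $\mathbb{G}$ concretely through the exponential map and then read off the group law from the Baker--Campbell--Hausdorff formula. Since an $H$-type group is a connected, simply connected, step-$2$ nilpotent Lie group (Remark \ref{rmk.H.are.Carnot}), the exponential map $\exp : \mathfrak{g} \to \mathbb{G}$ is a global diffeomorphism, and because all iterated brackets of length $\geqslant 3$ vanish, the group law in exponential coordinates reduces to
\[
\exp(A)\cdot \exp(B) = \exp\!\left( A + B + \tfrac{1}{2}[A,B]\right).
\]
First I would fix an orthonormal basis $\{X_1,\dots,X_m\}$ of $\mathfrak{z}^{\perp}$ and an orthonormal basis $\{Z_1,\dots,Z_n\}$ of $\mathfrak{z}$, where $m=\dim\mathfrak{z}^{\perp}$ and $n=\dim\mathfrak{z}$. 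Writing a generic element of $\mathfrak{g}$ as $\sum_i \overline{x}_i X_i + \sum_k \widehat{x}_k Z_k$ identifies $\mathbb{G}$ with $\R^{m}\times\R^{n}$ and produces the coordinates $(\overline{x},\widehat{x})$ appearing in the statement.

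Next I would compute the bracket term. Because $\mathfrak{z}$ is the center, only the horizontal--horizontal brackets survive, and they lie in $\mathfrak{z}$; expanding $[A,B]=\sum_{i,j}\overline{x}_i \overline{y}_j [X_i,X_j]$ and pairing against $Z_k$ shows that the $k$-th central coordinate of $\tfrac12[A,B]$ equals $\tfrac12\sum_{i,j}\overline{x}_i\overline{y}_j\langle [X_i,X_j],Z_k\rangle$. Defining $U^{(k)}$ to be the matrix of the map $J_{Z_k}$ in the frame $\{X_i\}$, that is $U^{(k)}_{ji}=\langle J_{Z_k}X_i,X_j\rangle=\langle Z_k,[X_i,X_j]\rangle$, this central coordinate becomes exactly $\tfrac12\langle U^{(k)}\overline{x},\overline{y}\rangle$, which reproduces \eqref{eqn.group.law}. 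Thus the structural form of the product is a direct transcription of the BCH formula in an adapted orthonormal frame.

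It then remains to verify the three algebraic properties from the defining relations of an $H$-type algebra. Skew-symmetry of each $U^{(k)}$ is immediate: $\langle J_Z X,Y\rangle=\langle Z,[X,Y]\rangle=-\langle Z,[Y,X]\rangle=-\langle J_Z Y,X\rangle$, so $J_Z$ is skew-adjoint and its matrix in an orthonormal basis is skew-symmetric. Orthogonality of $U^{(k)}$ is precisely the $H$-type hypothesis applied to the unit vector $Z_k$. The anticommutation relation is the one genuinely substantive step, and I would obtain it by polarization: combining skew-adjointness $J_Z^{*}=-J_Z$ with orthogonality $J_Z^{*}J_Z=\mathrm{Id}$ for unit $Z$ gives $J_Z^{2}=-\mathrm{Id}$, and since $Z\mapsto J_Z$ is linear the homogeneous identity $J_Z^{2}=-|Z|^{2}\,\mathrm{Id}$ holds for every $Z\in\mathfrak{z}$. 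Applying this to $Z_i+Z_j$ and expanding yields $J_{Z_i}J_{Z_j}+J_{Z_j}J_{Z_i}=-2\langle Z_i,Z_j\rangle\,\mathrm{Id}$, which for the orthonormal pair $Z_i,Z_j$ with $i\neq j$ is exactly $U^{(i)}U^{(j)}+U^{(j)}U^{(i)}=0$.

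The main obstacle is this Clifford-type identity. The $H$-type definition only asserts orthogonality of $J_Z$ for unit vectors, so the passage to a bilinear relation between distinct $J_{Z_i}$ requires first upgrading orthogonality to the quadratic identity $J_Z^{2}=-|Z|^{2}\,\mathrm{Id}$ on all of $\mathfrak{z}$ and then polarizing; everything else is essentially bookkeeping in the chosen orthonormal frame. I would also take care to confirm that the construction is intrinsic, namely that different choices of adapted orthonormal bases produce matrices $U^{(k)}$ related by an orthogonal change of frame, so that properties 1 and 2 are genuine features of the algebra and not artifacts of the coordinates.
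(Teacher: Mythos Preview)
Your argument is correct and complete. The paper itself does not prove this theorem; it is stated there as a quotation of \cite[Theorem~18.2.1]{BonfiglioliLanconelliUguzzoniBook}, so there is no in-paper proof to compare against. Your route---exponential coordinates plus the step-$2$ Baker--Campbell--Hausdorff formula, identification $U^{(k)}=J_{Z_k}$ in an adapted orthonormal frame, and the Clifford relation $J_Z^{2}=-|Z|^{2}\,\mathrm{Id}$ obtained from skew-adjointness together with orthogonality on unit vectors, followed by polarization---is exactly the standard derivation used in that reference. The point you flag as the ``main obstacle'' (passing from orthogonality of $J_Z$ for unit $Z$ to the bilinear anticommutation relation) is handled correctly by your linearity-plus-polarization step, and nothing further is needed.
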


\begin{remark}
The integers $m$ and $n$ in Theorem \ref{thm.char.H.groups} only depend on the structure of $\mathbb{G}$. More precisely, $m= \dim \mathfrak{z}^{\perp}$ and $n= \dim \mathfrak{z}$.
\end{remark}

By \cite[Proposition~5.1.4, p.~230]{BonfiglioliLanconelliUguzzoniBook} one has that all homogeneous norms on a Carnot group are equivalent. $H$-type groups are Carnot groups of step $2$ and hence, without loss of generality, we can focus our attention on the homogeneous norm
\begin{equation}\label{eqn.hom.norm.H.typw}
\vert x \vert := \left( \vert \overline{x}\vert_{\R^{m}}^{4} + \vert \widehat{x} \vert_{\R^{n}}^{2} \right)^{\frac{1}{4}}
\end{equation}
for any $x= \left( \overline{x}, \widehat{x} \right) \in \R^{m+n}$. We refer to Remark 
\ref{rmk.any.norm} for bounds on the spectral gap on balls with respect to a general homogeneous norm. 

\begin{remark}\label{rmk.properties.matrices}
The matrices $U^{(i)}$ in Theorem \ref{thm.char.H.groups} satisfy
\begin{equation}\label{eqn.prop.matrices}
\langle U^{(i)} \overline{x}, U^{(j)} \overline{x} \rangle =0,
\end{equation}
for any $\overline{x}\in \R^{m}$ and any $i\neq j$ in $\{ 1,\ldots, n\}$. Indeed, $U^{(i)} U^{(i) \, T} = I$ and  $U^{(i)} = - U^{(i) \, T}$, and hence, for any $i\neq j$
\begin{align*}
& \langle U^{(i)} \overline{x}, U^{(j)} \overline{x} \rangle = \langle U^{(j)} U^{(i)} \overline{x},  - \overline{x} \rangle = \langle U^{(i)} U^{(j)} \overline{x},   \overline{x} \rangle.
\end{align*}
On the other hand
\begin{align*}
&  \langle U^{(i)} \overline{x}, U^{(j)} \overline{x} \rangle =  \langle -\overline{x}, U^{(i)}  U^{(j)} \overline{x} \rangle =- \langle U^{(i)}  U^{(j)}\overline{x},  \overline{x} \rangle .
\end{align*}
\end{remark}

We now describe the Maurer-Cartan form, the hypoelliptic Brownian motion on $\mathbb{G}$, and its infinitesimal generator $\frac{1}{2}\Delta_{\mathbb{G}}$.

\begin{proposition}\label{prop.MC.for.Brownian.motion}
Let $\mathbb{G}$ be an $H$-type group. Then the Maurer-Cartan form is given by

\begin{equation*}
\theta^{\ell}_{k} (v)=  \left( \overline{v},
\widehat{v}_{1} - \frac{1}{2} \langle U^{(1)}\overline{k}, \overline{v}\rangle, \ldots, \widehat{v}_{n} - \frac{1}{2} \langle U^{(n)}\overline{k}, \overline{v}\rangle \right)
\end{equation*}
for any $k=( \overline{k}, \widehat{k} )\in \mathbb{G}$ and $v= (\overline{v}, \widehat{v})\in T_{k} \mathbb{G}$.

Left-invariant vector fields at $x= (\overline{x}, \widehat{x})$ can be written as
\begin{align*}
& X_{j}= \frac{\partial}{\partial \overline{x}_{j}}-\frac{1}{2}\sum_{s=1}^{n} \left( \sum_{i=1}^{m} U^{(s)}_{ji} \overline{x}_{i} \right)\frac{\partial}{\partial \widehat{x}_{s}}, & j=1, \ldots, m,
\\
& Z_{i}= \frac{\partial}{\partial \widehat{x}_{i}}, & i=1, \ldots, n.
\end{align*}
A hypoelliptic Brownian motion on $\mathbb{G}$ starting at the identity can be written as
\begin{equation*}
g_{t} = \left( B_{t}, A_{t} \right),
\end{equation*}
where $B_{t}$ is a standard Brownian motion on $\R^{m}$ and $A_{t} = ( A_{1} (t), \ldots, A_{n} (t) )$, with
\begin{equation}\label{eqn.hypo.bm.coefficients}
A_{i}(t) = \frac{1}{2} \int_{0}^{t} \langle U^{(i)} B_{s}, dB_{s} \rangle, \; i=1,\ldots, n.
\end{equation}
\end{proposition}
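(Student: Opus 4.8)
The plan is to derive all three displayed formulas by direct computation from the explicit group law \eqref{eqn.group.law} together with the properties of $U^{(1)},\dots,U^{(n)}$ in Theorem \ref{thm.char.H.groups} and Remark \ref{rmk.properties.matrices}. First I would pin down the group inverse: solving $k\cdot k^{-1}=e$ componentwise in \eqref{eqn.group.law}, the horizontal component forces $\overline{k^{-1}}=-\overline k$, and since each $U^{(i)}$ is skew-symmetric we have $\langle U^{(i)}\overline k,\overline k\rangle=0$, so every central component gives $\widehat{k^{-1}}_i=-\widehat k_i$; hence $k^{-1}=(-\overline k,-\widehat k)$. Substituting this into $L_k(h)=k^{-1}\cdot h$ and applying \eqref{eqn.group.law} once more, $L_k$ is the polynomial map whose horizontal part is $\overline h-\overline k$ and whose $i$-th central component is $\widehat h_i-\widehat k_i-\tfrac12\langle U^{(i)}\overline k,\overline h\rangle$. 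This map is affine in $h$ with base-point-independent linear part in each slot, so $d(L_k)_h$ is constant in $h$; evaluating it at $h=k$ on $v=(\overline v,\widehat v)\in T_k\mathbb{G}$ gives exactly the stated expression for $\theta^\ell_k(v)$.

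Next I would read off the left-invariant vector fields. Because $\theta^\ell$ is the left Maurer--Cartan form, a vector field $X$ is left-invariant precisely when $\theta^\ell_g(X_g)$ is independent of $g$, so the left-invariant field equal to $W\in\mathfrak g$ at the identity is $X_g=(\theta^\ell_g)^{-1}(W)$. For each $g$ the map $\theta^\ell_g$ is, in the splitting $\R^m\oplus\R^n$, block-lower-triangular with identity diagonal blocks, hence trivially invertible; inverting it against the standard basis of $\mathfrak g\cong\R^m\times\R^n$ is a one-line computation and yields the horizontal fields $X_j$ and the central fields $Z_i=\partial_{\widehat x_i}$ as displayed. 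I would then cross-check against Example \ref{ex.Heisenberg} (for $n=1$ the result must recover $X,Y,Z$ and $[X,Y]=Z$), which fixes the sign conventions. Since $\{X_1,\dots,X_m\}$ is then an orthonormal basis of $\mathcal H=\mathfrak z^\perp$, the sub-Laplacian is $\Delta_{\mathbb{G}}=\sum_{j=1}^m X_j^2$.

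Finally, for the hypoelliptic Brownian motion I would solve the $\mathfrak g$-valued stochastic differential equation $\theta^\ell_{g_t}(dg_t)=(dB_t,0)$, $g_0=e$, that characterizes the horizontal Brownian motion of Definition \ref{dfn.hypo.BM}, with $B_t$ an $\R^m$-valued Brownian motion. Writing $g_t=(\overline g_t,\widehat g_t)$ and inserting the Maurer--Cartan formula, the horizontal block reads $d\overline g_t=dB_t$, so $\overline g_t=B_t$, while the $i$-th central block reads $d\widehat g_{t,i}=\tfrac12\langle U^{(i)}\overline g_t,d\overline g_t\rangle=\tfrac12\langle U^{(i)}B_t,dB_t\rangle$, which integrates to $\widehat g_{t,i}=A_i(t)$ with $A_i$ as in \eqref{eqn.hypo.bm.coefficients}. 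Equivalently, this is the coordinate form of the Stratonovich equation $dg_t=\sum_{j=1}^m X_j(g_t)\circ dB^j_t$, which makes transparent that the generator is $\tfrac12\sum_j X_j^2=\tfrac12\Delta_{\mathbb{G}}$; together with uniqueness of solutions, this shows that $g_t=(B_t,A_t)$ is the hypoelliptic Brownian motion started at $e$.

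I expect the only genuinely delicate point to be the stochastic calculus in the last step, namely verifying that the geometric (Stratonovich) SDE and its Itô rewriting coincide, so that $A_i(t)$ is the stated Itô integral and is a true (local) martingale with no drift. This uses skew-symmetry once more: expanding $\langle U^{(i)}B_s,dB_s\rangle=\sum_{j\neq k}U^{(i)}_{kj}(B_s)_j\,d(B_s)_k$ leaves no diagonal terms, and for $j\neq k$ the components of $B$ are independent, so their covariation vanishes and the Itô--Stratonovich correction is zero. The rest is bookkeeping: keeping straight the convention $L_g(h)=g^{-1}h$ built into $\theta^\ell$, and observing that treating the ambient $\R^{m+n}$ as exponential coordinates of the first kind is legitimate here because $\mathbb{G}$ has step $2$ and \eqref{eqn.group.law} is exactly the Baker--Campbell--Hausdorff product.
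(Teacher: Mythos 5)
Your proposal is correct and follows essentially the same route as the paper: it computes $k^{-1}\cdot h$ from the group law \eqref{eqn.group.law} (using skew-symmetry to get $k^{-1}=(-\overline k,-\widehat k)$), differentiates to obtain $\theta^{\ell}_{k}$, inverts it to read off the invariant vector fields, and solves $\theta^{\ell}_{g_t}(dg_t)=(dB_t,0)$ componentwise to get \eqref{eqn.hypo.bm.coefficients}, with your verification that the It\^o--Stratonovich correction vanishes by skew-symmetry being a useful detail the paper leaves implicit. One caveat: inverting $\theta^{\ell}_{x}$ (and your Heisenberg cross-check) actually yields $X_{j}=\partial_{\overline x_{j}}+\tfrac{1}{2}\sum_{s}\bigl(\sum_{i}U^{(s)}_{ji}\overline x_{i}\bigr)\partial_{\widehat x_{s}}$, i.e.\ the opposite sign to the proposition's displayed formula (the displayed fields are the right-invariant ones, a harmless inconsistency in the paper since replacing $U^{(s)}$ by $-U^{(s)}$ changes nothing downstream), so your assertion that the inversion gives the fields ``as displayed'' should be read modulo that sign convention.
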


\begin{proof}
Let $\gamma (t)= (\overline{\gamma} (t), \widehat{\gamma} (t) )$, $0\leqslant t \leqslant 1$, be a curve in $\mathbb{G}$ with $\gamma(0)=k \in \mathbb{G}$ and $\gamma^{\prime} (0) = v  \in T_{k}\mathbb{G}$. Then
\begin{align*}
& \theta^{\ell}_{k} (v):= dL_{k}(v) = \left.\frac{d}{dt}\right|_{t=0} k^{-1} \gamma (t)
\\
& = \left.\frac{d}{dt}\right|_{t=0} \left( -\overline{k} + \overline{\gamma}(t), -\widehat{k}_{1} + \widehat{\gamma}_{1}(t) - \frac{1}{2} \langle U^{(1)}\overline{k}, \overline{\gamma}(t)\rangle, \ldots,  \right.
\\
&
\left.
-\widehat{k}_{n} + \widehat{\gamma}_{n}(t) - \frac{1}{2} \langle U^{(n)}\overline{k}, \overline{\gamma}(t)\rangle  \right)
\\
& = \left(\overline{v}, \widehat{v}_{1} - \frac{1}{2} \langle U^{(1)}\overline{k}, \overline{v}\rangle, \ldots, \widehat{v}_{n} - \frac{1}{2} \langle U^{(n)}\overline{k}, \overline{v}\rangle \right),
\end{align*}
from which the expression for left-invariant vector fields follows.

Let $g_{t}$ be a hypoelliptic Brownian motion on $\mathbb{G}$, then
\begin{align*}
& (dB_{t}, 0)= \theta^{\ell}_{g_{t}} (dg_{t})
\\
& =   \left( d\overline{g}_{t},d\widehat{g}_{1}(t) - \frac{1}{2} \langle U^{(1)}\overline{g}_{t}, d\overline{g}_{t}\rangle, \ldots, d \widehat{g}_{n}(t) - \frac{1}{2} \langle U^{(n)}\overline{g}_{t}, d\overline{g}_{t}\rangle \right),
\end{align*}
that is,
\[
g_{t} = \left( B_{t},  \frac{1}{2}\int_{0}^{t} \langle U^{(1)}B_{s}, dB_{s}\rangle, \ldots,  \frac{1}{2} \int_{0}^{t}\langle U^{(n)}B_{s}, dB_{s}\rangle  \right),
\]
where $B_{t}$ is a standard Brownian motion on $\R^{m}$.
\end{proof}

\begin{corollary}\label{cor.sub.lap}
The sub-Laplacian on an $H$-type group $\mathbb{G}$ is given by
\begin{equation*}
\Delta_{\mathbb{G}} = \sum_{j=1}^{m} X_{j}^{2}= \Delta_{ \overline{x}}  - \sum_{i=1}^{n} \langle U^{(i)} \overline{x}, \nabla_{\overline{x}} \rangle \frac{\partial}{\partial \widehat{x}_{i}} + \frac{1}{4}  \vert \overline{x}\vert^{2} \Delta_{\widehat{x}},
\end{equation*}
where $x= ( \overline{x}, \widehat{x} )$,
\begin{align*}
& \Delta_{ \overline{x}} =\sum_{j=1}^{m} \frac{\partial^{2}}{\partial \overline{x}^{2}_{j}},
\\
& \Delta_{ \widehat{x}} =\sum_{i=1}^{n} \frac{\partial^{2}}{\partial \widehat{x}^{2}_{i}},
\\
& \text{and } \; \nabla_{\overline{x}} = \left(\frac{\partial}{\partial \overline{x}_{1}}, \ldots, \frac{\partial}{\partial \overline{x}_{m}} \right).
\end{align*}
is the horizontal gradient.
\end{corollary}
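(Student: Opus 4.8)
The plan is to compute $\Delta_{\mathbb{G}}=\sum_{j=1}^{m}X_{j}^{2}$ directly from the explicit expression for the left-invariant horizontal vector fields obtained in Proposition~\ref{prop.MC.for.Brownian.motion}. It is convenient to abbreviate $c_{js}(\overline{x}):=\sum_{i=1}^{m}U^{(s)}_{ji}\overline{x}_{i}=\big(U^{(s)}\overline{x}\big)_{j}$, so that $X_{j}=\partial_{\overline{x}_{j}}-\tfrac12\sum_{s=1}^{n}c_{js}\,\partial_{\widehat{x}_{s}}$, where each coefficient $c_{js}$ depends only on the $\overline{x}$-variables. Applying $X_{j}$ twice, and using that $\partial_{\widehat{x}_{t}}c_{js}=0$ to combine the two mixed second-order contributions, I expect
\begin{equation*}
X_{j}^{2}=\partial_{\overline{x}_{j}}^{2}-\tfrac12\sum_{s=1}^{n}\big(\partial_{\overline{x}_{j}}c_{js}\big)\partial_{\widehat{x}_{s}}-\sum_{s=1}^{n}c_{js}\,\partial_{\overline{x}_{j}}\partial_{\widehat{x}_{s}}+\tfrac14\sum_{s,t=1}^{n}c_{js}c_{jt}\,\partial_{\widehat{x}_{s}}\partial_{\widehat{x}_{t}} .
\end{equation*}

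Next I would simplify term by term after summing over $j$. First, $\partial_{\overline{x}_{j}}c_{js}=U^{(s)}_{jj}=0$ since each $U^{(s)}$ is skew-symmetric (Theorem~\ref{thm.char.H.groups}, property 1), so the first-order term in $\widehat{x}$ drops out entirely. The sum $\sum_{j}\partial_{\overline{x}_{j}}^{2}$ is $\Delta_{\overline{x}}$. For the mixed term, $\sum_{j}c_{js}\,\partial_{\overline{x}_{j}}=\sum_{j}\big(U^{(s)}\overline{x}\big)_{j}\partial_{\overline{x}_{j}}=\langle U^{(s)}\overline{x},\nabla_{\overline{x}}\rangle$, which produces the drift term $-\sum_{s}\langle U^{(s)}\overline{x},\nabla_{\overline{x}}\rangle\partial_{\widehat{x}_{s}}$. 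For the pure second-order term in $\widehat{x}$, the coefficient of $\partial_{\widehat{x}_{s}}\partial_{\widehat{x}_{t}}$ is $\sum_{j}c_{js}c_{jt}=\langle U^{(s)}\overline{x},U^{(t)}\overline{x}\rangle$; this vanishes for $s\neq t$ by Remark~\ref{rmk.properties.matrices}, and for $s=t$ it equals $|U^{(s)}\overline{x}|^{2}=|\overline{x}|^{2}$ because $U^{(s)}$ is orthogonal. Hence $\tfrac14\sum_{s,t}\langle U^{(s)}\overline{x},U^{(t)}\overline{x}\rangle\,\partial_{\widehat{x}_{s}}\partial_{\widehat{x}_{t}}=\tfrac14|\overline{x}|^{2}\Delta_{\widehat{x}}$, and adding the three pieces gives the asserted formula.

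There is no serious obstacle here: the computation is routine and purely algebraic. The only places where the $H$-type structure enters are the vanishing of the diagonal entries of the skew-symmetric matrices $U^{(s)}$ (which kills the spurious first-order term) and the orthogonality relation $\langle U^{(s)}\overline{x},U^{(t)}\overline{x}\rangle=\delta_{st}|\overline{x}|^{2}$ coming from Remark~\ref{rmk.properties.matrices} together with orthogonality of each $U^{(s)}$. The one thing to be careful about is the bookkeeping of mixed partial derivatives and the factors of $\tfrac12$ when expanding $X_{j}^{2}$, and the (standard) fact that $\Delta_{\mathbb{G}}$ does not depend on the choice of orthonormal horizontal basis, so it suffices to work with the specific $X_{j}$ listed above.
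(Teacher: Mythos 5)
Your computation is correct and is essentially identical to the paper's proof: both expand $\sum_{j}X_{j}^{2}$ directly from the explicit vector fields, kill the first-order term via $U^{(s)}_{jj}=0$, and reduce the second-order $\widehat{x}$-term using $\langle U^{(s)}\overline{x},U^{(t)}\overline{x}\rangle=\delta_{st}\vert\overline{x}\vert^{2}$ from Remark~\ref{rmk.properties.matrices} and orthogonality. No issues.
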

\begin{proof}
One has that
\begin{align*}
&\sum_{j=1}^{m} X_{j}^{2} = \sum_{j=1}^{m} \left( \partial_{\overline{x}_{j}} - \frac{1}{2} \sum_{s=1}^{n} \sum_{i=1}^{m} \left( U_{ji}^{(s)} \overline{x}_{i} \right) \partial_{\widehat{x}_{s}} \right)^{2}
\\
& =  \Delta_{ \overline{x}}  - \frac{1}{2} \sum_{j,i=1}^{m}\sum_{s=1}^{n} \left( \partial_{\overline{x}_{j}} \left( U^{(s)}_{ji} \overline{x}_{i} \partial_{\widehat{x}_{s}} \right) + U^{(s)}_{ji} \overline{x}_{i} \partial^{2}_{\widehat{x}_{s} \overline{x}_{j}} \right)
\\
& + \frac{1}{4} \sum_{p,s=1}^{n} \sum_{j,i =1}^{m} U^{(s)}_{ji} \overline{x}_{i} U^{(p)}_{jl} \overline{x}_{l} \partial^{2}_{\widehat{x}_{s} \widehat{x}_{p}}
\\
& =  \Delta_{ \overline{x}} -  \frac{1}{2} \sum_{j,i=1}^{m}\sum_{s=1}^{n} \left( U^{(s)}_{ji} \delta_{ji} \partial_{\widehat{x}_{s}} + 2U^{(s)}_{ji} \overline{x}_{i} \partial^{2}_{\widehat{x}_{s} \overline{x}_{j}} \right) + \frac{1}{4} \sum_{s,p=1}^{n}\langle U^{(s)} \overline{x}, U^{(p)} \overline{x} \rangle \partial^{2}_{\widehat{x}_{s} \widehat{x}_{p}},
\end{align*}
and by $\eqref{eqn.prop.matrices}$
\begin{align*}
&  \Delta_{ \overline{x}} -  \frac{1}{2}\sum_{s=1}^{n} \left(  \sum_{i=1}^{m}U^{(s)}_{ii} \partial_{\widehat{x}_{s}} + 2 \sum_{j,i=1}^{m}U^{(s)}_{ji} \overline{x}_{i} \partial^{2}_{\widehat{x}_{s} \overline{x}_{j}} \right) + \frac{1}{4} \sum_{s,p=1}^{n}\langle U^{(s)} \overline{x}, U^{(p)} \overline{x} \rangle \partial^{2}_{\widehat{x}_{s} \widehat{x}_{p}}
\\
& = \Delta_{ \overline{x}} - \sum_{s=1}^{n} \sum_{j,i=1}^{m}U^{(s)}_{ji} \overline{x}_{i} \partial^{2}_{\widehat{x}_{s} \overline{x}_{j}} + \frac{1}{4} \sum_{s=1}^{n} \vert U^{(s)} \overline{x}\vert^{2} \partial^{2}_{\widehat{x}_{s}^{2}}
\\
& =  \Delta_{ \overline{x}} - \sum_{s=1}^{n} \langle U^{(s)} \overline{x}, \nabla_{\overline{x}} \rangle \frac{\partial}{\partial \widehat{x}_{s}} + \frac{1}{4} \vert \overline{x}\vert^{2} \Delta_{\widehat{x}},
\end{align*}
where in the second to last line we used that $U^{(s)}_{ii}=0$ for all $i=1,\ldots, m$ and all $s=1, \ldots, n$.
\end{proof}

\section{Proof of the main result}

\subsection{A probabilistic lemma}

Let us recall that  if $B_{t}$ is a two-dimensional standard Brownian motion and $A_{t} := \frac{1}{2} \int_{0}^{t} B_{1}(s) dB_{2}(s) - B_{2}(s)dB_{1}(s)$ the corresponding L\'evy area, then $A_{t} = W_{\tau (t)}$, where $W_{t}$ is a standard real-valued Brownian motion independent of $\vert B_{t} \vert$, \cite[p.470]{IkedaWatanabe1989}.  This idea has been used in \cite{CarfagniniGordina2022} to study small deviations for a hypoelliptic Brownian $g_{t}$ motion on the Heisenberg group $\Hei$. The next lemma extends the result  in \cite{IkedaWatanabe1989} to the vector-valued case.

\begin{lemma}\label{lemma.IW.general}
Let  $g_{t} = (B_{t}, A_{t})$ be a hypoelliptic Brownian motion on an $H$-type group $\mathbb{G}  \cong \R^{m+n}$, where $B_{t}$ is a standard Brownian motion in $\R^{m}$ and $A_{t} = ( A_{1} (t), \ldots, A_{n} (t) )$, with
\[
A_{i}(t) = \frac{1}{2} \int_{0}^{t} \langle U^{(i)} B_{s}, dB_{s} \rangle, \; i=1,\ldots, n.
\]
\end{lemma}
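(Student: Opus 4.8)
The lemma should assert the $H$-type analogue of \eqref{eqn.idk}: there is an $n$-dimensional standard Brownian motion $W=(W_{1},\ldots,W_{n})$, independent of the radial process $(|B_{t}|)_{t\geqslant 0}$, such that $A_{i}(t)=W_{i}(\tau(t))$ for all $i$ and $t$, where $\tau(t):=\tfrac{1}{4}\int_{0}^{t}|B_{s}|^{2}\,ds$. The plan is to reduce this to two elementary bracket computations and then invoke Knight's theorem (the multidimensional Dambis--Dubins--Schwarz theorem).

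First I would record the covariation structure of $A_{t}$. Since $A_{i}(t)=\tfrac{1}{2}\int_{0}^{t}\langle U^{(i)}B_{s},dB_{s}\rangle$, one computes
\[
d\langle A_{i},A_{j}\rangle_{s}=\tfrac{1}{4}\langle U^{(i)}B_{s},U^{(j)}B_{s}\rangle\,ds .
\]
For $i=j$ this equals $\tfrac{1}{4}|U^{(i)}B_{s}|^{2}\,ds=\tfrac{1}{4}|B_{s}|^{2}\,ds$ because $U^{(i)}$ is orthogonal, and for $i\neq j$ it vanishes by \eqref{eqn.prop.matrices}. Hence $\langle A_{i},A_{j}\rangle_{t}=\delta_{ij}\,\tau(t)$. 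Next I would bring in the radial martingale $\beta_{t}:=\int_{0}^{t}\frac{\langle B_{s},dB_{s}\rangle}{|B_{s}|}$: it is a one-dimensional standard Brownian motion by L\'evy's theorem, since its bracket is $\int_{0}^{t}|B_{s}|^{-2}|B_{s}|^{2}\,ds=t$, and $|B_{t}|$ is the $m$-dimensional Bessel process driven by $\beta$, i.e.\ the pathwise-unique strong solution of $dR_{t}=d\beta_{t}+\frac{m-1}{2R_{t}}\,dt$ with $R_{0}=0$; in particular $(|B_{t}|)_{t\geqslant 0}$ is adapted to the filtration generated by $\beta$. Using that $U^{(i)}$ is skew-symmetric, so that $\langle U^{(i)}v,v\rangle=0$ for every $v$, one gets
\[
d\langle A_{i},\beta\rangle_{s}=\tfrac{1}{2}\,\frac{\langle U^{(i)}B_{s},B_{s}\rangle}{|B_{s}|}\,ds=0 .
\]
Thus $A_{1},\ldots,A_{n},\beta$ are pairwise orthogonal continuous local martingales vanishing at $0$.

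Then I would apply Knight's theorem to the $(n+1)$-tuple $(A_{1},\ldots,A_{n},\beta)$. After checking the non-degeneracy hypotheses $\langle A_{i}\rangle_{\infty}=\tfrac{1}{4}\int_{0}^{\infty}|B_{s}|^{2}\,ds=\infty$ a.s.\ and $\langle\beta\rangle_{\infty}=\infty$, the time-changed processes $W_{i}(u):=A_{i}(\tau^{-1}(u))$, together with $\beta$ itself, form an $(n+1)$-dimensional standard Brownian motion. In particular $W:=(W_{1},\ldots,W_{n})$ is an $n$-dimensional standard Brownian motion independent of $\beta$, hence independent of $(|B_{t}|)_{t\geqslant 0}$ since the latter is a measurable functional of $\beta$. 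Unwinding the time change gives $A_{i}(t)=W_{i}(\langle A_{i}\rangle_{t})=W_{i}(\tau(t))$, which is the assertion.

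I expect the main subtlety to be the bookkeeping forced by the degenerate start $B_{0}=0$, rather than any single computation: one must check that $\beta$ is a genuine continuous local martingale (the integrand $B_{s}/|B_{s}|$ has unit norm and $B_{s}\neq 0$ for all $s>0$ a.s.\ because $m\geqslant 2$ on an $H$-type group), that $|B_{t}|$ solves the Bessel SDE strongly so that it lies inside the filtration of $\beta$, and that the clock $\tau$ is a.s.\ continuous, strictly increasing and unbounded so that $\tau^{-1}$ is well defined on all of $[0,\infty)$ and Knight's theorem applies without enlarging the probability space. The last item reduces to the classical fact that $\int_{0}^{\infty}|B_{s}|^{2}\,ds=\infty$ a.s., for which I would include a one-line argument.
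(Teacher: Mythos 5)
Your proposal is correct and follows essentially the same route as the paper: the same bracket computations $\langle A_i,A_j\rangle_t=\delta_{ij}\tau(t)$ and $\langle A_i,\beta\rangle_t=0$ (the paper's $X_t$ is your $\beta_t$), followed by Knight's theorem in the form of \cite[Ch.~II, Theorem~7.3]{IkedaWatanabe1989} applied to the $(n+1)$-tuple of orthogonal martingales, yielding $A_i(t)=W_i(\tau(t))$ with $W$ independent of the radial part. The only cosmetic difference is that the paper deduces $\sigma\{\vert B_s\vert,\ s\leqslant t\}\subset\sigma\{X_s,\ s\leqslant t\}$ directly from the identity $\vert B_t\vert^{2}=2\int_{0}^{t}\vert B_s\vert\,dX_s+mt$ rather than from the Bessel SDE and pathwise uniqueness, and it does not spell out the nondegeneracy checks you flag, which are indeed routine here.
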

Then there exists an $n$-dimensional Brownian motion $W_{t}$ such that

1. $\vert W_{t} \vert$ is independent of $\vert B_{t} \vert$.

2. $A_{t} = W_{\tau (t)}$, where $\tau (t) := \frac{1}{4} \int_{0}^{t} \vert B_{s} \vert^{2} ds$.

\begin{proof}
Let $X_{t}$ be the one-dimensional Brownian motion given by
\[
X_{t} = \sum_{k=1}^{m} \int_{0}^{t} \frac{B_{k}(s)}{\vert B_{s} \vert} dB_{s},
\]
and let us consider the $(n+1)$-dimensional martingale $(X_{t}, A_{1}(t), \ldots, A_{n} (t) )$. Note that the quadratic variation of $A_{i}(t)$ is independent of $i$ since the matrices $U^{(i)}$ are orthogonal for $i=1,\ldots, n$. Indeed,
\begin{align*}
dA_{i}(t) =  \frac{1}{2} \langle U^{(i)} B_{t}, dB_{t} \rangle,
\end{align*}
from which it follows that
\begin{align*}
d \langle A_{i} \rangle_{t} = \frac{1}{4} \sum_{k=1}^{m} \left(U^{(i)} B_{t} \right)^{2}_{k}dt = \frac{1}{4} \vert U^{(i)} B_{t} \vert^{2} dt= \frac{1}{4} \vert  B_{t} \vert^{2} dt.
\end{align*}

 We claim that the following covariations are zero
\begin{align}
& \langle A_{i}, A_{j} \rangle_{t} =0, \; \text{for all} \, i \neq j \, \text{in} \, \{ 1,\ldots, n\} \label{eqn.cov.one}
\\
&
\langle A_{i}, X \rangle_{t} =0, \; \text{for all} \, i=1,\ldots n.\label{eqn.cov.two}.
\end{align}
Indeed, by \eqref{eqn.hypo.bm.coefficients}
\begin{align*}
& d (A_{i} (t) + A_{j} (t) ) = \frac{1}{2} \langle U^{(i)} B_{t} + U^{(j)} B_{t}, dB_{t} \rangle,
\end{align*}
and hence, by \eqref{eqn.prop.matrices}
\begin{align*}
& d \langle A_{i} + A_{j} \rangle_{t} = \frac{1}{4} \sum_{k=1}^{m} \left( U^{(i)} B_{t} + U^{(j)} B_{t} \right)^{2}_{k} dt
\\
& =  \frac{1}{4} \sum_{k=1}^{m}  \left( U^{(i)} B_{t}\right)^{2}_{k}dt + \frac{1}{4} \sum_{k=1}^{m}  \left( U^{(j)} B_{t}\right)^{2}_{k}dt + \frac{1}{2} \langle   U^{(i)} B_{t},  U^{(j)} B_{t} \rangle dt
\\
& = d \langle A_{i} \rangle_{t} + d \langle A_{j} \rangle_{t},
\end{align*}
for all $i\neq j$ in $1, \ldots, n$, which proves \eqref{eqn.cov.one}. Similarly,
\begin{align*}
& d (X_{t} + A_{i} (t) ) = \sum_{k=1}^{m} \left( \frac{1}{\vert B_{t} \vert} B_{k}(t) + \frac{1}{2} \left( U^{(i)} B_{t} \right)_{k} \right)dB_{k}(t),
\end{align*}
and hence
\begin{align*}
& d \langle X_{t} + A_{i} (t) \rangle= \sum_{k=1}^{m}  \left( \frac{1}{\vert B_{t} \vert} B_{k}(t) + \frac{1}{2} \left( U^{(i)} B_{t} \right)_{k} \right)^{2}dt
\\
& =  \sum_{k=1}^{m}  \frac{B_{k}(t)^{2}}{\vert B_{t} \vert^{2}}dt + \frac{1}{4} \sum_{k=1}^{m} \left( U^{(i)} B_{t}\right)^{2}_{k}dt + \frac{1}{\vert B_{t} \vert} \sum_{k=1}^{m} B_{k}(t) \left( U^{(i)} B_{t}\right)_{k} dt
\\
& = d \langle X \rangle_{t} + d \langle A_{i} \rangle_{t} + \frac{1}{\vert B_{t} \vert } \langle U^{(i)} B_{t}, B_{t} \rangle dt = d \langle X \rangle_{t} + d \langle A_{i} \rangle_{t},
\end{align*}
where in the last line we used that $U^{(i)}$ is skew-symmetric for $i=1,\ldots, n$. Thus, by \cite[Ch.2 Sect Theorem 7.3]{IkedaWatanabe1989} there exists an $(n+1)$-dimensional  Brownian motion $\left(  W_{0} (t), W_{1} (t), \ldots, W_{n} (t) \right)$ such that
\begin{align*}
& X_{t} = W_{0} (\langle X \rangle_{t} ), & A_{1} (t)= W_{1} (\langle A_{1} \rangle_{t} ), && \ldots \ldots &&  A_{n} (t)= W_{n} (\langle A_{n} \rangle_{t} ),
\end{align*}
that is,
\begin{align*}
& X_{t} = W_{0} (t), & A_{1} (t)= W_{1} ( \tau (t) ), && \ldots \ldots &&  A_{n} (t)= W_{n} ( \tau (t)),
\end{align*}
where $ \tau (t) := \frac{1}{4}\int_{0}^{t} \vert B_{s} \vert^{2}ds$. In particular, $X_{t}$ is independent of $W_{t} : =\left( W_{1}(t), \ldots, W_{n} (t) \right)$. The proof is complete once we prove that $W_{t}$ is independent of $\vert B_{t} \vert$. Note that
\[
\vert B_{t} \vert^{2} = 2\int_{0}^{t}  \vert B_{s} \vert dX_{s} + mt,
\]
and hence $\sigma \{ \vert B_{s} \vert,  s\leqslant t \} \subset \sigma \{ X_{s},  s\leqslant t \}$, and thus $\vert B_{t} \vert$ is independent of $W_{t}$.
\end{proof}

\subsection{Spectral bounds}

\begin{theorem}\label{thm.main}
Let $\mathfrak{g}$ be an $H$-type Lie algebra with center $\mathfrak{z}$ and set $n:=\dim \mathfrak{z}$, $m:=\dim \mathfrak{z}^{\perp}$. Let $\mathbb{G}$ be the corresponding $H$-type group with the homogeneous norm $\vert \cdot \vert$ defined by \eqref{eqn.hom.norm.H.typw} and the sub-Laplacian $\Delta_{\mathbb{G}}$. Then
\begin{equation}\label{eqn.main.bound}
\lambda_{1}^{(m)} \leqslant \lambda_{1} \leqslant c \left( \lambda_{1}^{(m)},\lambda_{1}^{(n)}  \right),
\end{equation}
where $\lambda_{1} = \lambda_{1} (m,n)$ is the spectral gap of $-\frac{1}{2} \Delta_{\mathbb{G}}$ restricted to the unit ball $\{ x\in\mathbb{G}: \vert x \vert <1\}$, and
\begin{align*}
& c \left( \lambda_{1}^{(m)},\lambda_{1}^{(n)}  \right):= f( x^{\ast} )= \inf_{0 < x<1} f(x),
\\
& f(x) = \frac{\lambda_{1}^{(m)}}{\sqrt{1-x}} + \frac{\lambda_{1}^{(n)} \sqrt{1-x}}{4x},
\\
& x^{\ast}= \frac{\sqrt{ \left( \lambda_{1}^{(n)}\right)^{2} + 32 \lambda_{1}^{(n)} \lambda_{1}^{(m)} } - 3\lambda_{1}^{(n)}  }{2 \left( 4\lambda_{1}^{(m)} - \lambda_{1}^{(n)} \right)},
\end{align*}
where $\lambda_{1}^{(n)}$ is the lowest Dirichlet eigenvalue on the unit ball in $\R^{n}$ defined in Notation \ref{notation.evalues}.
\end{theorem}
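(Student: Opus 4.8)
The plan is to extract both bounds from the small deviation principle \eqref{eqn.small.dev.hypo.BM}, which identifies $\lambda_1 T$ as the exponential rate of $\Prob(\max_{0\le t\le T}|g_t| < \varepsilon)$ for the hypoelliptic Brownian motion $g_t = (B_t, A_t)$. Using Proposition~\ref{prop.MC.for.Brownian.motion} and the homogeneous norm \eqref{eqn.hom.norm.H.typw}, the event $\{\max_t |g_t| < \varepsilon\}$ is $\{\max_t (|B_t|^4 + |A_t|^2)^{1/2} < \varepsilon^2\}$ — note $|g_t| < \varepsilon$ means $|B_t|^4 + |A_t|^2 < \varepsilon^4$, so it is cleaner to write $\{\max_t |B_t|^4 < \varepsilon^4\} \cap \{\max_t |A_t|^2 < \varepsilon^4\}$ up to splitting the budget. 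I would first handle the lower bound $\lambda_1^{(m)} \le \lambda_1$: since $|g_t| \ge |B_t|$, containment of $g_t$ in the $\varepsilon$-ball forces $\max_t |B_t| < \varepsilon$, hence $\Prob(\max_t |g_t| < \varepsilon) \le \Prob(\max_t |B_t| < \varepsilon)$, and comparing the rates via \eqref{eqn.small.dev.BM} (with $T=1$) and \eqref{eqn.small.dev.hypo.BM} gives $\lambda_1 \ge \lambda_1^{(m)}$ immediately.

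For the upper bound I would use Lemma~\ref{lemma.IW.general} to write $A_t = W_{\tau(t)}$ with $\tau(t) = \frac14\int_0^t |B_s|^2\,ds$ and $W$ an $n$-dimensional Brownian motion independent of $|B_\cdot|$. Fix $x \in (0,1)$ and split: on the event $\{\max_{0\le t\le T}|B_t| < (1-x)^{1/4}\varepsilon\}$ we have $\max_t |B_t|^4 < (1-x)\varepsilon^4$ and also $\tau(T) \le \frac{T}{4}\sqrt{1-x}\,\varepsilon^2$, so that conditionally on $B$, the process $t\mapsto W_{\tau(t)}$ stays in a ball of radius $x^{1/4}\varepsilon$ with probability at least $\Prob(\max_{0\le s\le \frac{T}{4}\sqrt{1-x}\,\varepsilon^2}|W_s| < x^{1/4}\varepsilon)$. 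Hence
\begin{equation*}
\Prob\!\left(\max_{0\le t\le T}|g_t| < \varepsilon\right) \ge \Prob\!\left(\max_{0\le t\le T}|B_t| < (1-x)^{1/4}\varepsilon\right)\cdot \Prob\!\left(\max_{0\le s\le \frac{T}{4}\sqrt{1-x}\,\varepsilon^2}|W_s| < x^{1/4}\varepsilon\right),
\end{equation*}
using the independence. Now apply \eqref{eqn.small.dev.BM}: the first factor has rate $-\varepsilon^2\log(\cdot) \to \lambda_1^{(m)} T /\sqrt{1-x}$ (rescaling $\varepsilon \mapsto (1-x)^{1/4}\varepsilon$ changes $T$ by $1/\sqrt{1-x}$), and by Brownian scaling the second factor equals $\Prob(\max_{0\le u\le \frac{T}{4}\sqrt{1-x}}|W_u| < x^{1/2})$, whose $-\varepsilon^2\log$ tends to $\lambda_1^{(n)}\cdot \frac{T\sqrt{1-x}}{4x}$. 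Taking $-\varepsilon^2\log$ of the product and sending $\varepsilon\to 0$ (and using \eqref{eqn.small.dev.hypo.BM} on the left) yields $\lambda_1 T \le \big(\tfrac{\lambda_1^{(m)}}{\sqrt{1-x}} + \tfrac{\lambda_1^{(n)}\sqrt{1-x}}{4x}\big)T = f(x)T$ for every $x\in(0,1)$; optimizing over $x$ gives $\lambda_1 \le \inf_{0<x<1} f(x)$, and a one-variable calculus computation (setting $f'(x)=0$, which reduces to a quadratic in a suitable variable) identifies the minimizer as $x^\ast$ and the value as $c(\lambda_1^{(m)},\lambda_1^{(n)})$.

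The main obstacle is making the conditioning step fully rigorous: $\tau(t)$ depends on the whole path of $B$, so one must argue that, conditionally on $\sigma(|B_s|, s\le T)$, the time-changed process $W_{\tau(t)}$ is still a Brownian motion run for a (random but bounded by $\tau(T)$) amount of time, and that the small-ball event for $W$ over the deterministic horizon $\frac{T}{4}\sqrt{1-x}\,\varepsilon^2$ is contained in the conditional event — this uses that $\{\max_{0\le t\le T}|W_{\tau(t)}| < \delta\} \supseteq \{\max_{0\le s\le \tau(T)}|W_s| < \delta\}$ pointwise since $\tau$ is nondecreasing with $\tau(T)$ bounded above on the event we conditioned on, together with the independence of $W$ from $|B_\cdot|$ from Lemma~\ref{lemma.IW.general}. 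A secondary point is bookkeeping the powers of $\varepsilon$ correctly through the homogeneous norm and the Brownian rescalings so that the $T$-dependence matches \eqref{eqn.small.dev.hypo.BM}; the calculus optimization is routine but must be checked against the stated formula for $x^\ast$, including the sign of $4\lambda_1^{(m)} - \lambda_1^{(n)}$ in the denominator.
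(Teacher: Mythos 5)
Your proposal follows essentially the same route as the paper's proof: the identical lower bound via $\vert g_{t}\vert \geqslant \vert B_{t}\vert$ and \eqref{eqn.small.dev.BM}, and the identical upper bound via Lemma~\ref{lemma.IW.general}, the $(1-x,x)$ split of the budget $\varepsilon^{4}$, the bound $\tau(1)\leqslant \frac{1}{4}\sqrt{1-x}\,\varepsilon^{2}$ on the good event, independence of $W$ from $\vert B\vert$, Brownian scaling, and minimization of $f$ over $(0,1)$. The only blemish is an exponent slip in the intermediate radius for $W$ (the constraint $\vert A_{t}\vert^{2}<x\varepsilon^{4}$ gives radius $\sqrt{x}\,\varepsilon^{2}$, not $x^{1/4}\varepsilon$), but your final rate $\lambda_{1}^{(n)}\sqrt{1-x}/(4x)$ is correct and matches the paper.
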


\begin{corollary}
We have that
\begin{equation}\label{eqn.asymptotic}
\lim_{m\rightarrow \infty} \frac{\lambda_{1}(m,n)}{\lambda_{1}^{(m)}}=1,
\end{equation}
that is, when the dimension $m$ of the center of $\mathbb{G}$  is large, the hypoelliptic spectral gap is approximated by the spectral gap on $\R^{m}$. Moreover, for any $m>n$
\begin{align*}
\lambda_{1}^{(m)} \leqslant \lambda_{1}(m,n) \leqslant 2\lambda_{1}^{(m)},
\end{align*}
which gives a bound for small values of $m$ and $n$.
\end{corollary}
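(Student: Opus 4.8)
Both parts of the corollary will follow by feeding the explicit two-sided estimate \eqref{eqn.main.bound} of Theorem~\ref{thm.main} into short elementary arguments, so the plan is light on new ideas. For the asymptotic \eqref{eqn.asymptotic}, I would fix an arbitrary $x\in(0,1)$ and sandwich the ratio: the lower bound in \eqref{eqn.main.bound} gives $\lambda_1(m,n)/\lambda_1^{(m)}\geqslant 1$, while the upper bound, together with $f(x^{\ast})=\inf_{0<y<1}f(y)\leqslant f(x)$, gives
\[
\frac{\lambda_1(m,n)}{\lambda_1^{(m)}}\leqslant\frac{f(x)}{\lambda_1^{(m)}}=\frac{1}{\sqrt{1-x}}+\frac{\lambda_1^{(n)}\sqrt{1-x}}{4x\,\lambda_1^{(m)}}.
\]
The one extra ingredient is that $\lambda_1^{(m)}\to\infty$ as $m\to\infty$; this follows from domain monotonicity of Dirichlet eigenvalues, since the unit ball sits inside the cube $(-1,1)^m$, whose lowest Dirichlet eigenvalue for $-\tfrac12\Delta$ is $m\pi^2/8$, so $\lambda_1^{(m)}\geqslant m\pi^2/8$. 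Letting $m\to\infty$ with $n$ and $x$ held fixed then yields $\limsup_{m\to\infty}\lambda_1(m,n)/\lambda_1^{(m)}\leqslant 1/\sqrt{1-x}$; since this holds for every $x\in(0,1)$, letting $x\downarrow 0$ forces the $\limsup$ to be at most $1$, and combined with the pointwise lower bound $\geqslant 1$ we obtain \eqref{eqn.asymptotic}.

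For the second statement, fix $m>n$. The lowest Dirichlet eigenvalue of $-\tfrac12\Delta$ on the unit ball is increasing in the dimension (a classical monotonicity of the first positive zeros of Bessel functions), so $\lambda_1^{(n)}\leqslant\lambda_1^{(m)}$. Evaluating $f$ at the admissible point $x=\tfrac12$ gives
\[
f\!\left(\tfrac12\right)=\sqrt{2}\,\lambda_1^{(m)}+\frac{\sqrt{2}}{4}\,\lambda_1^{(n)}\leqslant\frac{5\sqrt{2}}{4}\,\lambda_1^{(m)}<2\,\lambda_1^{(m)},
\]
and since Theorem~\ref{thm.main} yields $\lambda_1(m,n)\leqslant f(x^{\ast})\leqslant f(\tfrac12)$, the bound $\lambda_1^{(m)}\leqslant\lambda_1(m,n)\leqslant 2\,\lambda_1^{(m)}$ follows.

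There is essentially no serious obstacle once Theorem~\ref{thm.main} is in hand; the work is purely in manipulating the explicit function $f$. The only non-algebraic inputs are the two classical spectral facts—that $\lambda_1^{(m)}$ grows without bound in $m$ (handled above by comparison with a cube) and that it is monotone in the dimension—and one should double-check the harmless points that $\tfrac12\in(0,1)$ and that $f$ is finite there, so that the substitution is legitimate. If one wanted a self-contained treatment one could instead of the Bessel fact note that $\lambda_1^{(n)}\leqslant\lambda_1^{(m)}$ whenever $m>n$ is itself a consequence of the same domain-monotonicity estimate applied to a product of balls, but invoking the standard monotonicity is cleaner.
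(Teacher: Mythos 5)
Your proposal is correct, and for the two-sided bound it takes a genuinely different (and cleaner) route than the paper. For the asymptotic \eqref{eqn.asymptotic} you argue essentially as the paper does: both start from $1\leqslant \lambda_1(m,n)/\lambda_1^{(m)}\leqslant \frac{1}{\sqrt{1-x}}+\frac{\lambda_1^{(n)}}{\lambda_1^{(m)}}\frac{\sqrt{1-x}}{4x}$ and send $m\to\infty$, then $x\downarrow 0$; the only difference is that the paper justifies $\lambda_1^{(m)}\to\infty$ by quoting the asymptotics $\lambda_1^{(d)}\sim(2\pi)^{\frac{d+1}{d}}2^{-\frac{2}{d}}(d!!)^{\frac{2}{d}}$, while you use the elementary comparison of the unit ball with the cube $(-1,1)^m$, giving $\lambda_1^{(m)}\geqslant m\pi^2/8$ — a more self-contained input that suffices. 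For the bound $\lambda_1(m,n)\leqslant 2\lambda_1^{(m)}$ the paper estimates the minimizer, showing $\frac{c}{4}\leqslant x^{\ast}\leqslant\frac{3\sqrt{c}-c}{4-c}$ with $c=\lambda_1^{(n)}/\lambda_1^{(m)}<1$, and then bounds $f(x^{\ast})\leqslant\lambda_1^{(m)}\bigl(\sqrt{\tfrac{4-c}{4-3\sqrt{c}}}+\tfrac{\sqrt{4-c}}{2}\bigr)$; you instead simply use $f(x^{\ast})=\inf_{0<x<1}f(x)\leqslant f(\tfrac12)=\sqrt{2}\,\lambda_1^{(m)}+\tfrac{\sqrt{2}}{4}\lambda_1^{(n)}\leqslant\tfrac{5\sqrt{2}}{4}\lambda_1^{(m)}<2\lambda_1^{(m)}$. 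Your test-point argument avoids the delicate two-sided control of $x^{\ast}$ entirely and is in fact more robust: the paper's displayed intermediate quantity $\sqrt{\tfrac{4-c}{4-3\sqrt{c}}}+\tfrac{\sqrt{4-c}}{2}$ exceeds $2$ for $c$ near $1$ (e.g.\ it tends to $\sqrt{3}+\tfrac{\sqrt{3}}{2}$ as $c\to 1$), so the last link of the paper's chain is questionable as printed, whereas your constant $\tfrac{5\sqrt{2}}{4}$ is uniformly below $2$. Both arguments need the dimensional monotonicity $\lambda_1^{(n)}\leqslant\lambda_1^{(m)}$ for $m>n$ (the paper asserts $c<1$ without comment; you cite the monotonicity of the first Bessel zeros, or the product-of-balls domain comparison), so that input is on the same footing in either treatment.
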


\begin{proof}
The lower bound in \eqref{eqn.main.bound}  follows from the small deviation principle \eqref{eqn.small.dev.BM} for an $\R^{m}$-valued Brownian motion and the fact that
\[
\Prob \left( \max_{0 \leqslant t \leqslant 1}\vert g_{t} \vert < \varepsilon  \right) \leqslant\Prob \left( \max_{0 \leqslant t \leqslant 1}\vert B_{t} \vert < \varepsilon  \right).
\]
Let us now prove the upper bound. By Proposition~\ref{prop.MC.for.Brownian.motion} and Lemma~\ref{lemma.IW.general}  it follows that a horizontal Brownian motion $g_{t}$ on $\mathbb{G}$ can be written as $g_{t} = \left( B_{t}, A_{t} \right) = \left( B_{t}, W_{\tau (t)} \right)$, where $B_{t}$ and $W_{t}$ are $m$-dimensional  and $n$-dimensional independent Brownian motions, and $\tau (t) = \frac{1}{4}\int_{0}^{t} \vert B_{s} \vert^{2} ds$. For any $x \in (0, 1)$ we have
\begin{align*}
& \Prob \left( \max_{0 \leqslant t \leqslant 1}\vert g_{t} \vert < \varepsilon \right) = \Prob \left( \max_{0 \leqslant t \leqslant 1}\vert B_{t} \vert^{4}_{\R^{m}} + \vert  A_{t} \vert^{2}_{\R^{n}} < \varepsilon^{4} \right)
\\
& \geqslant \Prob \left( \max_{0 \leqslant t \leqslant 1}\vert B_{t} \vert^{4}_{\R^{m}} < (1-x) \varepsilon^{4}, \; \max_{0 \leqslant t \leqslant 1}\vert A_{t} \vert^{2}_{\R^{n}} < x \varepsilon^{4}  \right)
\\
&  = \Prob \left( \max_{0 \leqslant t \leqslant 1}\vert B_{t} \vert^{4}_{\R^{m}} < (1-x) \varepsilon^{4}, \; \max_{0 \leqslant t \leqslant 1}\vert W_{\tau (t)} \vert^{2}_{\R^{n}} < x \varepsilon^{4}  \right)
\\
&  = \Prob \left( \max_{0 \leqslant t \leqslant 1}\vert B_{t} \vert^{4}_{\R^{m}} < (1-x) \varepsilon^{4}, \; \max_{0 \leqslant t \leqslant \tau (1)}\vert W_{t} \vert^{2}_{\R^{n}} < x \varepsilon^{4}  \right)
\\
& \geqslant \Prob \left( \max_{0 \leqslant t \leqslant 1}\vert B_{t} \vert^{4}_{\R^{m}} < (1-x) \varepsilon^{4}, \; \max_{0 \leqslant t \leqslant  \frac{1}{4} \sqrt{1-x} \varepsilon^{2}} \vert W_{t} \vert^{2}_{\R^{n}} < x \varepsilon^{4}  \right)
\\
&= \Prob \left( \max_{0 \leqslant t \leqslant 1}\vert B_{t} \vert^{4}_{\R^{m}} < (1-x) \varepsilon^{4}, \; \max_{0 \leqslant t \leqslant 1 } \vert W_{t  \frac{1}{4} \sqrt{1-x} \varepsilon^{2}} \vert^{2}_{\R^{n}} < x \varepsilon^{4}  \right)
\\
& =  \Prob \left( \max_{0 \leqslant t \leqslant 1}\vert B_{t} \vert^{4}_{\R^{m}} < (1-x) \varepsilon^{4}, \; \max_{0 \leqslant t \leqslant 1 }  \frac{1}{4} \sqrt{1-x} \varepsilon^{2} \vert W_{t} \vert^{2}_{\R^{n}} < x \varepsilon^{4}  \right)
\\
& =  \Prob \left( \max_{0 \leqslant t \leqslant 1}\vert B_{t} \vert^{4}_{\R^{m}} < (1-x) \varepsilon^{4}, \; \max_{0 \leqslant t \leqslant 1 } \vert W_{t} \vert^{2}_{\R^{n}} < \frac{4 x \varepsilon^{2}}{\sqrt{1-x}}  \right)
\\
&  = \Prob \left( \max_{0 \leqslant t \leqslant 1}\vert B_{t} \vert^{4}_{\R^{m}} < (1-x) \varepsilon^{4} \right) \Prob \left( \max_{0 \leqslant t \leqslant 1 } \vert W_{t} \vert^{2}_{\R^{n}} < \frac{4 x \varepsilon^{2}}{\sqrt{1-x}}  \right).
\end{align*}
Thus,
\begin{align*}
& \log  \Prob \left( \max_{0 \leqslant t \leqslant 1}\vert g_{t} \vert < \varepsilon \right) \geqslant \log  \Prob \left( \max_{0 \leqslant t \leqslant 1}\vert B_{t} \vert_{\R^{m}} < (1-x)^{\frac{1}{4}} \varepsilon \right)
\\
& + \log  \Prob \left( \max_{0 \leqslant t \leqslant 1 } \vert W_{t} \vert_{\R^{n}} < \frac{2 \sqrt{x} \varepsilon}{(1-x)^{\frac{1}{4}}}  \right),
\end{align*}
and hence
\begin{align*}
& -\varepsilon^{2}  \log  \Prob \left( \max_{0 \leqslant t \leqslant 1}\vert g_{t} \vert < \varepsilon \right)
\\
& \leqslant  -\varepsilon^{2} \sqrt{1-x} \log  \Prob \left( \max_{0 \leqslant t \leqslant 1}\vert B_{t} \vert_{\R^{m}} < (1-x)^{\frac{1}{4}} \varepsilon \right)   \frac{1}{\sqrt{1-x}}
\\
& - \varepsilon^{2} \frac{4x}{\sqrt{1-x}} \log \Prob \left( \max_{0 \leqslant t \leqslant 1 } \vert W_{t} \vert_{\R^{n}} < \frac{2 \sqrt{x} \varepsilon}{(1-x)^{\frac{1}{4}}}  \right) \frac{\sqrt{1-x}}{4x}.
\end{align*}
From the small deviation principle \eqref{eqn.small.dev.BM} for a standard Brownian motion applied to $B_{t}$ and $W_{t}$ and the one for a hypoelliptic Brownian motion applied to $g_{t}$ it follows that
\begin{align}\label{eqn.estimate.2}
\lambda_{1}(m,n) \leqslant  \frac{\lambda_{1}^{(m)}}{\sqrt{1-x}} + \frac{\lambda_{1}^{(n)} \sqrt{1-x}}{4x},
\end{align}
for all $x$ in $(0,1)$. Note that
\[
f(x) :=  \frac{\lambda_{1}^{(m)}}{\sqrt{1-x}} + \frac{\lambda_{1}^{(n)} \sqrt{1-x}}{4x} =   \lambda_{1}^{(m)} \left( \frac{1}{\sqrt{1-x}} + \frac{c  \sqrt{1-x}}{4x} \right) >0
\]
for all $x\in (0,1)$, where $c:= \frac{\lambda_{1}^{(n)}  }{\lambda_{1}^{(m)}}$. Note that $f$ always has a local minimum over $(0,1)$ even if we do not rely on the values of the eigenvalues $\lambda_{1}^{(m)}$ and $\lambda_{1}^{(n)}$, and the minimum is achieved at
\begin{align*}
& x^{\ast} =  \frac{\sqrt{ c^{2} + 32 c} - 3c }{2 \left( 4 - c \right)} \in (0,1).
\end{align*}
which gives \eqref{eqn.main.bound}. Moreover,
\begin{align*}
&  \frac{c}{4} \leqslant  \frac{c \sqrt{33} -3c}{8}   \leqslant \frac{\sqrt{ c^{2} + 32 c} - 3c }{8}  \leqslant x^{\ast}  \leqslant \frac{3\sqrt{c} -c}{4-c},
\end{align*}
since $c<1$ for $m>n$. Thus,
\begin{align*}
 \lambda_{1}^{(m)}  \leqslant \lambda_{1}  (m,n) \leqslant f(x^{\ast}) \leqslant  \lambda_{1}^{(m)}  \left( \frac{\sqrt{4-c}}{\sqrt{4-3 \sqrt{c}}} + \frac{\sqrt{4-c}}{2} \right) \leqslant 2 \lambda_{1}^{(m)},
\end{align*}
for any $m>n$. 

Let us now prove \eqref{eqn.asymptotic}. First note that  $\lambda_{1}^{(d)} \rightarrow \infty$ as $d\rightarrow \infty$. Indeed, by \cite{LiYau1983} we have that
\begin{align*}
\lambda_{1}^{(d)}  \geqslant \frac{d}{d+2} 4\pi^{2}\omega_{d}^{-\frac{4}{d}} \rightarrow \infty, \quad \text{as } d\rightarrow \infty,
\end{align*}
where $\omega_{d}$ denotes the volume of the unit $d$-dimensional ball. Then, by  \eqref{eqn.estimate.2} it follows that
\[
1\leqslant \frac{\lambda_{1}(m,n)}{\lambda_{1}^{(m)}} \leqslant  \frac{1}{\sqrt{1-x}} + \frac{\lambda_{1}^{(n)} }{\lambda_{1}^{(m)}} \frac{ \sqrt{1-x}}{4x},
\]
for all $x\in (0,1)$, and the asymptotic \eqref{eqn.asymptotic} then follows by letting $d$ to infinity first and then $x$ to zero. 
\end{proof}

\begin{remark}\label{rmk.any.norm}
Let $\rho$ be any homogeneous norm on $\mathbb{G}$, and $\mu_{1}$ be the spectral gap of $-\frac{1}{2}\Delta_{\mathbb{G}}$ restricted to the ball $\{ x\in \mathbb{G} : \rho (x) <1\}$. Then there exists a constant $c>1$ such that 
\begin{align}
\frac{1}{c^{2}} \lambda_{1} \leqslant \mu_{1} \leqslant c^{2} \lambda_{1}, \label{eqn.bound.any.norm}
\end{align}
where $\lambda_{1} $ is the spectral gap of $-\frac{1}{2} \Delta_{\mathbb{G}}$ restricted to the homogeneous ball $\{ x\in\mathbb{G} : \vert x \vert <1\}$, where $\vert \cdot \vert$ is given by \eqref{eqn.hom.norm.H.typw}. Indeed, by  \cite[Proposition~5.1.4, p.~230]{BonfiglioliLanconelliUguzzoniBook} all homogeneous norms are equivalent, that is, there exists a constant $c>1$ such that 
\[
c^{-1} \vert x\vert \leqslant \rho (x) \leqslant c \vert x \vert, 
\]
where $\vert \cdot \vert$ is given by \eqref{eqn.hom.norm.H.typw}, and hence 
\begin{align*}
& \Prob \left( \max_{0\leqslant t \leqslant 1} \vert g_{t} \vert < c^{-1} \varepsilon \right) \leqslant \Prob \left( \max_{0\leqslant t \leqslant 1} \rho ( g_{t} ) <  \varepsilon \right) \leqslant\Prob \left( \max_{0\leqslant t \leqslant 1} \vert g_{t} \vert < c\varepsilon \right) .
\end{align*}
Then \eqref{eqn.bound.any.norm} follows by \cite[Corollary 5.4]{CarfagniniGordina2023}.
\end{remark}

\bibliographystyle{amsplain}

\begin{thebibliography}{1}

\bibitem{BonfiglioliLanconelliUguzzoniBook}
A.~Bonfiglioli, E.~Lanconelli, and F.~Uguzzoni, \emph{Stratified {L}ie groups
  and potential theory for their sub-{L}aplacians}, Springer Monographs in
  Mathematics, Springer, Berlin, 2007. \MR{2363343}

\bibitem{Carfagnini2022}
Marco Carfagnini, \emph{An application of the {G}aussian correlation inequality
  to the small deviations for a {K}olmogorov diffusion}, Electron. Commun.
  Probab. \textbf{27} (2022), Paper No. 20, 7. \MR{4402793}


\bibitem{CarfagniniGordina2022}
Marco Carfagnini and Maria Gordina, \emph{Small deviations and {C}hung's law of iterated logarithm for a
  hypoelliptic {B}rownian motion on the {H}eisenberg group}, Trans. Amer. Math.
  Soc. Ser. B \textbf{9} (2022), 322--342. \MR{4410042}

\bibitem{CarfagniniGordina2023}
\bysame, \emph{{D}irichlet sub-{L}aplacians on homogeneous {C}arnot groups:
  spectral properties, asymptotics, and heat content}, International
  Mathematics Research Notices (2023), rnad065.

\bibitem{DriverGrossSaloff-Coste2009a}
Bruce~K. Driver, Leonard Gross, and Laurent Saloff-Coste, \emph{Holomorphic
  functions and subelliptic heat kernels over {L}ie groups}, J. Eur. Math. Soc.
  (JEMS) \textbf{11} (2009), no.~5, 941--978. \MR{2538496 (2010h:32052)}

\bibitem{IkedaWatanabe1989}
Nobuyuki Ikeda and Shinzo Watanabe, \emph{Stochastic differential equations and
  diffusion processes}, second ed., North-Holland Mathematical Library,
  vol.~24, North-Holland Publishing Co., Amsterdam, 1989. \MR{MR1011252
  (90m:60069)}

\bibitem{Kaplan1980}
Aroldo Kaplan, \emph{Fundamental solutions for a class of hypoelliptic {PDE}
  generated by composition of quadratic forms}, Trans. Amer. Math. Soc.
  \textbf{258} (1980), no.~1, 147--153. \MR{MR554324 (81c:58059)}

\bibitem{LiYau1983}
Peter Li and Shing~Tung Yau, \emph{On the {S}chr\"odinger equation and the
  eigenvalue problem}, Comm. Math. Phys. \textbf{88} (1983), no.~3, 309--318.
  \MR{701919 (84k:58225)}

\end{thebibliography}

\end{document}